\documentclass{amsart}
\usepackage{amsmath}%
\usepackage{amsfonts}%
\usepackage{amssymb}%
\usepackage{graphicx}
\usepackage{fullpage}
\usepackage{xcolor}
\usepackage{geometry}
\geometry{left=20mm, right=20mm, top=20mm, bottom=20mm}
\usepackage[hidelinks]{hyperref}
\usepackage{bm}

\newcommand{\la}{\lambda}

\newcommand{\R}{\mathbb{R}}
\newcommand{\C}{\mathbb{C}}
\newcommand{\N}{\mathbb{N}}

\newcommand{\T}{\mathbb{T}}

\newcommand{\comment}[1]{}

\newtheorem{theorem}{Theorem}[section]
\newtheorem*{theorem*}{Theorem}
\newtheorem*{proposition*}{Proposition}

\newtheorem{corollary}[theorem]{Corollary}

\newtheorem{lemma}[theorem]{Lemma}
\newtheorem*{lemma*}{Lemma}

\newtheorem{proposition}[theorem]{Proposition}
\newtheorem{remark}[theorem]{Remark}

\newcommand{\e}{e}

\begin{document}
\author[P. Charron]{Philippe Charron}
\address[Philippe Charron]{Universit\'e de Gen\`eve, D\'epartement de Math\'ematiques, 1205, Geneva, Switzerland}
\email{philippe.charron@unige.ch}
\author[F. Pagano]{Fran\c cois Pagano}
\address[Fran\c cois Pagano]{Universit\'e de Gen\`eve, D\'epartement de Math\'ematiques, 1205, Geneva, Switzerland}
\email{francois.pagano@unige.ch}
\title{On the concentration of the Fourier coefficients for products of Laplace-Beltrami eigenfunctions on real-analytic manifolds}
\date{\today}

\begin{abstract}
    On a closed analytic manifold $(M,g)$, let $\phi_i$ be the eigenfunctions of $\Delta_g$ with eigenvalues $\la_i^2$ and let $f:=\prod \phi_{k_j}$ be a finite product of Laplace-Beltrami eigenfunctions. We show that $\left\langle f, \phi_i \right\rangle_{L^2(M)}$ decays exponentially as soon as $\la_i > C \sum \la_{k_j}$ for some constant $C$ depending only on $M$. Moreover, by using a lower bound on $\| f \|_{L^2(M)} $, we show that $99\%$  of the $L^2$-mass of $f$ can be recovered using only finitely many Fourier coefficients.
\end{abstract}

\maketitle

%

\section{Introduction and main results}

Consider the set $\{\phi_i\}_{i \geq 0}$ of $L^2$-normalized Laplace-Beltrami eigenfunctions\footnote{In this article, all the estimates involve the square root of the eigenvalues. Therefore, we define $\la_i^2$ as the eigenvalues rather than the more common $\la_i$ in order to ease the notation.} on a real-analytic closed manifold $(M, g)$: $$-\Delta_g \phi_i = \lambda_i^2 \phi_i.$$ 
For $f \in L^2(M)$, let us consider the Fourier decomposition $$f=\sum c_i(f) \phi_i, \quad c_i(f) := \int_M f \phi_i.$$

We are interested in the following question: if $f = \phi_j \phi_k$, how do the coefficients $c_i(f)$ decay? In other words, what is the error in the approximation of $\phi_j\phi_k$ by a finite number of eigenfunctions?

 In \cite{Seeley}, Seeley proved that for an arbitrary real-analytic function $f$, there exist $C(f)$ and $D(f)$ such that $|c_i(f)| \leq Ce^{-D\la_i}$. Since the Laplace-Beltrami eigenfunctions are analytic on a real-analytic manifold, one can apply this result to $f=\phi_j \phi_k$ to deduce the exponential decay of  $c_i(\phi_j \phi_k)$. However, it is unclear how the constants $C, D$ depend on $\phi_j$, $\phi_k$ and $M$.
 
 Motivated by questions from number theory, Sarnak \cite{Sarnak} proved the following result: if $M$ is a $d$-dimensional closed real-analytic manifold of constant sectional curvature -1 and $f = \phi_{k_1}\phi_{k_2}\cdots \phi_{k_n}$, then there exist $B(d,n)$ and $ C(f)$ such that $c_i(f) \leq C \lambda_i^B\e^{-\frac{\pi}{2}\la_i}$. In dimension two, Bernstein and Reznikov \cite{BernsteinReznikov} later replaced the power loss with a logarithmic loss (see also  \cite{KrotzStanton}).
In the case of general compact real-analytic manifolds, Zelditch \cite{Zelditch} proved that $c_i(\phi_j^2) \leq C \e^{-R\la_i}$ where $R$  is strictly less than the maximal analytic Grauert tube radius, and $C$ depends on $j$, $M$ and $R$. 

These results describe the behavior of $c_i(f)$ when $f$ is a product of eigenfunctions as $i$ goes to infinity, but they do not describe what happens for smaller values of $i$. For example, in the case of the flat torus $\T^d$, $c_i(\phi_j \phi_k) =0$ if ${\la_i}>{\la_j}+{\la_k}$. A natural question to ask is the following: for a general analytic manifold, what are the conditions on $i$, $j$, and $k$ such that $c_i(\phi_j \phi_k)$ decays exponentially? 

In the case of smooth manifolds, some recent results indicate when $c_i(f)$ starts to decay when $f$ is a product of eigenfunctions. Motivated by numerical applications for electronic structure computing \cite{LuYing}, Lu, Sogge, and Steinerberger \cite{LuSoggeSteinerberger, LuSteinerberger, Steinerberger}, showed that for any $\epsilon>0$, $c_i(\phi_j \phi_k)$ has polynomial decay at any order as soon as $\la_i\geq \max({\lambda_j}, {\lambda_k})^{1+\epsilon}$. This result was recently improved by Wyman \cite{Wyman} with the weaker condition $\la_i\geq (2+\epsilon)\max({\lambda_j}, {\lambda_k}).$  To the best of our knowledge, it is an open problem whether exponential decay holds for $c_i(\phi_j \phi_k)$ in the smooth setting.

\subsection{Main results}
 
 Our first main result shows quantitatively the exponential decay of $c_i\!\biggl(\prod\limits_{j \leq n} \phi_{k_j}\biggr)$ in the real-analytic setting:  

\begin{theorem}\label{maintheorem}
    Let $(M, g)$ be a real-analytic closed Riemannian manifold of dimension $d$. Let $\{\phi_k\}$ be an orthonormal basis of Laplace-Beltrami eigenfunctions on $M$ with eigenvalues $\la_k^2$ in increasing order.
    
    Let $n \geq 2$  and consider a finite set of positive indices $\{{k_1}, \dots, {k_n}\}$. There exist $c(M)$ and $C(M)$ such that the following holds: for any $i > 0$,
    \begin{align}
        \left|\int_{M} \phi_i\prod_{j\leq n}    \phi_{k_j}\right| \leq \e^{-c\la_i} \prod_{j \leq n}  \e^{C\la_{k_j}} \, .
    \end{align}

    \end{theorem}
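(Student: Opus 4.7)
My plan is to deduce the bound from a Cauchy-type derivative estimate on the product $F := \prod_{j\leq n}\phi_{k_j}$, obtained by extending $F$ holomorphically to a fixed Grauert tube, and then to exploit the eigenvalue equation for $\phi_i$ through iterated integration by parts.

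First, by classical analytic-hypoelliptic results (Boutet de Monvel, Lebeau, Zelditch), there exist $\rho=\rho(M)>0$ and $C_1=C_1(M)$ such that each $\phi_k$ extends holomorphically to the Grauert tube $M_\rho\subset M_\C$ of radius $\rho$ with
$$\|\phi_k^\C\|_{L^\infty(M_\rho)} \leq C_1 \e^{C_1\la_k}.$$
Taking products and absorbing the factor $C_1^n$ into the exponential via $\la_{k_j}\geq\la_1>0$, the holomorphic extension of $F$ to $M_\rho$ satisfies
$$\|F^\C\|_{L^\infty(M_\rho)} \leq \prod_{j\leq n} \e^{C_2\la_{k_j}}$$
for some constant $C_2(M)$.

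Next I would run the standard analytic-regularity machinery. Cauchy estimates on a polydisc of radius comparable to $\rho$ inside $M_\rho$ give $|\partial^\alpha F(x)|\leq |\alpha|!\,\rho^{-|\alpha|}\|F^\C\|_{L^\infty(M_\rho)}$ in any fixed analytic atlas of $M$. Since $\Delta_g^N$ is a differential operator of order $2N$ with analytic coefficients, expanding it in local charts and bounding each derivative of $F$ by the Cauchy estimate above yields
$$\|\Delta_g^N F\|_{L^2(M)} \leq \frac{C_3(M)^N (2N)!}{\rho^{2N}}\,\|F^\C\|_{L^\infty(M_\rho)}.$$
Using the self-adjointness of $\Delta_g$ together with $(-\Delta_g)^N\phi_i=\la_i^{2N}\phi_i$, this implies, for every $N\in\N$,
$$|c_i(F)| \leq \frac{C_3^N (2N)!}{(\rho\la_i)^{2N}}\prod_{j\leq n}\e^{C_2\la_{k_j}}.$$

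Finally, Stirling's formula lets me choose $N$ of order $\rho\la_i/(e\sqrt{C_3})$ to make the prefactor $C_3^N (2N)!/(\rho\la_i)^{2N}$ at most $\e^{-c\la_i}$ for some $c(M)>0$, yielding the theorem with $C=C_2$. The main obstacle is really Step 1: one needs the Grauert-tube extension of $\phi_k$ with constants depending only on $M$ and not on $k$, so that the bound is genuinely multiplicative over the $n$ factors with the same constant $C_2$; any polynomial prefactor $\la_k^{d/2}$ arising from the standard microlocal statements can be absorbed into the exponential using $\la_k\geq\la_1>0$. Once this uniform Grauert-tube input is granted, the remaining argument is an elementary Cauchy--Stirling optimization whose quality is completely insensitive to $n$.
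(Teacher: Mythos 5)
Your argument is correct in outline and takes a genuinely different route from the paper's. The paper does \emph{not} iterate the Laplacian: after establishing the same uniform holomorphic-extension input (Propositions \ref{propholomorphicextension}--\ref{lemmasupcomp}), it uses a quantitative complex Cauchy--Kovalevskaya theorem to build a real-analytic function $H$ on $M\times(-T,T)$ solving $\Delta_{g'}H=0$, $H|_{t=0}=\prod\phi_{k_j}$, $\partial_t H|_{t=0}=0$, with $\sup|H|$ and $\sup|\partial_t H|$ controlled by $\prod e^{O(\lambda_{k_j})}$; it then pairs $H$ against the explicit harmonic function $\phi_i(x)e^{-\lambda_i t}$ via Green's formula on $M\times[0,T]$, and the decay $e^{-T\lambda_i}$ falls out of the boundary term at $t=T$. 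Your route instead transfers powers of $\Delta_g$ onto $\phi_i$ by self-adjointness, bounds $\|\Delta_g^N F\|_{L^2}$ through Cauchy estimates on the Grauert tube, and optimizes $N$ via Stirling; the decay rate $e^{-c\lambda_i}$ comes from the factorial-versus-power competition rather than from a geometric slab of width $T$. Both arguments hinge on the same ``hard'' ingredient, namely a $k$-uniform bound $\sup_{M_\rho}|\phi_k^\C|\le e^{O(\lambda_k)}$, and after that each is essentially bookkeeping; what the paper buys by going through Cauchy--Kovalevskaya is that all the analytic-regularity combinatorics are outsourced to a single cited theorem, whereas what you buy is that you never have to leave $M$ or solve a Cauchy problem at all.

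The one place where your write-up is a bit too quick is the estimate $\|\Delta_g^N F\|_{L^2(M)}\le C_3^N(2N)!\,\rho^{-2N}\|F^\C\|_{L^\infty(M_\rho)}$. Expanding $\Delta_g^N$ directly in charts and hitting each derivative of $F$ with a Cauchy estimate is not obviously enough, because the coefficients of $\Delta_g^N$ involve derivatives of the $a_\alpha$'s of order up to $2N-2$, which themselves grow factorially, and the Leibniz expansion produces a number of terms that grows super-exponentially in $N$; naively summing the term-by-term Cauchy bounds can overshoot the $(2N)!$ budget. The clean way to get the estimate is to iterate the Cauchy estimate on the complexified operator $L_g$ directly: for any $0<\sigma<\rho$ one has $\sup_{M_{\rho-\sigma}}|L_g v|\le C\sigma^{-2}\sup_{M_\rho}|v|$, so applying this $N$ times with $\sigma=\rho/(2N)$ gives $\sup_{M_{\rho/2}}|L_g^N F^\C|\le (4CN^2/\rho^2)^N\sup_{M_\rho}|F^\C|$, and Stirling converts $N^{2N}$ into $e^{2N}(2N)!/\sqrt{4\pi N}$, giving exactly the claimed form with $C_3=4Ce^2$. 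With that substitution your proof is complete; the only remaining hygiene is to note that $N$ is an integer (take $N=\lfloor\rho\lambda_i/(2e\sqrt{C_3})\rfloor$) and that for bounded $\lambda_i$ the trivial $N=0$ bound suffices after enlarging $C$.
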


\comment{
The constants depend on the following factors:
\begin{enumerate}
    \item $C$ depends on the injectivity radius of $M$, on the maximal radius of convergence of the Taylor expansion of the metric at any point $p \in M$, and on the absolute value of the sectional curvatures of $M$.
    \item $c$ depends on the smallest radius of holomorphic extension of the eigenfunctions at any point $p \in M$, on the largest absolute value of the coefficients of the holomorphic extension of the Laplacian at any $p \in M$, on the injectivity radius of $M$, and on the maximal radius of convergence of the Taylor expansion of the metric at any point $p \in M$. 
    
    Another way of seeing it, which will be clear from the proof below, is that it corresponds to half the distance by which we can harmonically extend the product of the holomorphic extension of the eigenfunctions $\{\phi_{k_j}\}$.
    \item $D_1$ depends on the volume of the manifold, on the first non-zero eigenvalue $\lambda_1>0$, and on $c$.
    \item $D_2$ depends on the smallest radius of holomorphic extension of the eigenfunctions at any point $p \in M$. It also depends on the injectivity radius of $M$, and on the maximal radius of convergence of the Taylor expansion of the metric at any point $p \in M$.
\end{enumerate}
}

If we only consider indices $i$ large enough we get the following straightforward Corollary:

 \begin{corollary}\label{corollary}
     Let $(M, g)$ be a real-analytic closed Riemannian manifold of dimension $d$. Let $\{\phi_k\}$ be an orthonormal basis of Laplace-Beltrami eigenfunctions on $M$ with eigenvalues $\la_k^2$ in increasing order. Let $n\geq 2$ and consider a finite set of positive indices $\{{k_1}, \dots, {k_n}\}$. There exists $C_1(M)$ and $C_2(M)$ such that if $\la_i \geq  C_1\sum\limits_{j\leq n}   \la_{k_j} $, then 
\begin{align}
        \left|\int_{M} \phi_i\prod_{j\leq n} \phi_{k_j}  \right| \leq e^{-C_2 \la_i} \, .
    \end{align}
     \end{corollary}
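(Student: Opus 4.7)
The plan is to derive the corollary as an immediate consequence of Theorem~\ref{maintheorem} by choosing the constants so that the exponential gain from the $\la_i$ term dominates the exponential losses from the $\la_{k_j}$ terms. Let $c = c(M)$ and $C = C(M)$ be the constants provided by Theorem~\ref{maintheorem}, so that
\begin{align*}
    \left|\int_M \phi_i \prod_{j\leq n} \phi_{k_j}\right| \leq \exp\!\left(-c\la_i + C\sum_{j\leq n}\la_{k_j}\right).
\end{align*}

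The key observation is that the hypothesis $\la_i \geq C_1 \sum_{j\leq n}\la_{k_j}$ allows us to convert the loss term into a fraction of the gain term. Indeed, under that hypothesis,
\begin{align*}
    C\sum_{j\leq n}\la_{k_j} \leq \frac{C}{C_1}\la_i,
\end{align*}
so the exponent is bounded above by $-\bigl(c - \tfrac{C}{C_1}\bigr)\la_i$. Choosing $C_1 = C_1(M) := 2C/c$ yields $\tfrac{C}{C_1} = c/2$, and therefore setting $C_2 = C_2(M) := c/2 > 0$ gives
\begin{align*}
    \left|\int_M \phi_i \prod_{j\leq n}\phi_{k_j}\right| \leq e^{-C_2 \la_i},
\end{align*}
as claimed.

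Since the corollary is a direct consequence of the quantitative estimate in Theorem~\ref{maintheorem} via an elementary rearrangement of exponents, there is no genuine obstacle: once Theorem~\ref{maintheorem} is available, the only substantive choice is the threshold constant $C_1 = 2C/c$, and any strictly larger value of $C_1$ would work with a correspondingly larger admissible $C_2 < c$. The real work is entirely contained in proving Theorem~\ref{maintheorem}; the corollary is best viewed as a readable repackaging in terms of a sufficient condition on~$\la_i$.
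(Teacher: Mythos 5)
Correct, and this is exactly the intended argument: the paper labels Corollary~\ref{corollary} a ``straightforward'' consequence of Theorem~\ref{maintheorem} and omits the details, which are precisely the exponent rearrangement you carried out with $C_1 = 2C/c$ and $C_2 = c/2$.
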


The second result that we prove is a Lemma well-known to specialists: the $L^2$ norm of a product of Laplace-Beltrami eigenfunctions cannot be too small.

\begin{lemma}\label{propositionlowerboundbegin}
      Let $(M, g)$ be a smooth closed Riemannian manifold of dimension $d$. Let $\{\phi_k\}$ be an orthonormal basis of Laplace-Beltrami eigenfunctions on $M$ with eigenvalues $\la_k^2$ in increasing order. Let $n\geq 2$ and consider a finite set of positive integers $\{{k_1}, \dots, {k_n}\}$. There exists $C_3(M)$ and $C_4(M,n)$ such that:
     \begin{equation}
         \left\|\prod\limits_{j \leq n}\phi_{k_j} \right\|_{L^2(M)} \geq C_3   \prod_{j \leq n} e^{ -C_4 \la_{k_j}} \, .
     \end{equation}
 \end{lemma}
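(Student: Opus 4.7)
The plan is to reduce the $L^2$ bound to a lower bound on $\int_M \log|\phi_k|$ for each factor separately, via Jensen's inequality. Writing $f := \prod_{j \leq n}\phi_{k_j}$, the concavity of $\log$ applied to the normalized volume measure yields
\[
    \|f\|_{L^1(M)} \;\geq\; \mathrm{Vol}(M)\,\exp\!\left(\frac{1}{\mathrm{Vol}(M)}\sum_{j \leq n}\int_M\log|\phi_{k_j}|\,d\mathrm{Vol}\right),
\]
and Cauchy--Schwarz gives $\|f\|_{L^2(M)} \geq \|f\|_{L^1(M)}/\sqrt{\mathrm{Vol}(M)}$. So the lemma reduces to the single-eigenfunction log-integral estimate $\int_M \log|\phi_k|\,d\mathrm{Vol} \geq -C_5\la_k$ for some $C_5 = C_5(M)$.

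For this estimate, I would split $\int_M\log|\phi_k| = \int_M \log^+|\phi_k| - \int_M \log^+(1/|\phi_k|)$. The first integral is bounded by $\mathrm{Vol}(M)\log\|\phi_k\|_{L^\infty(M)} \lesssim \log\la_k$ via H\"ormander's sup-norm bound $\|\phi_k\|_{L^\infty} \leq C\la_k^{(d-1)/2}$, which is negligible compared to $\la_k$. For the second, I would invoke a Remez-type sublevel-set estimate for eigenfunctions,
\[
    \left|\{x \in M : |\phi_k(x)| \leq t\}\right| \;\leq\; C\,t^{1/(C'\la_k)}, \qquad t \in (0,1],
\]
which is a standard output of the Donnelly--Fefferman doubling inequality combined with the $L^\infty$ bound. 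The layer-cake formula then gives $\int_M \log^+(1/|\phi_k|) \leq C\int_0^\infty e^{-s/(C'\la_k)}\,ds = CC'\la_k$, hence $\int_M\log|\phi_k| \geq -C_5\la_k$. Substituting back produces the lemma with $C_4 := C_5/\mathrm{Vol}(M)$ depending only on $M$ (so the argument in fact yields a slightly stronger statement, with no $n$-dependence in $C_4$).

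The main obstacle is the Remez-type sublevel-set inequality. It is a quantitative refinement of the unique continuation theory for eigenfunctions, nontrivial but standard in the Donnelly--Fefferman--Lin framework; this is the technical input that makes the lemma ``well-known to specialists'' rather than elementary.
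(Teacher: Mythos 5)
Your plan is correct and takes a genuinely different route from the paper, and, as you note, it yields the slightly sharper conclusion with $C_4$ depending only on $M$. The paper argues pointwise: it combines the Donnelly--Fefferman inf--sup estimate (Theorem~\ref{donnellymin}), the Donnelly--Fefferman doubling inequality (Theorem~\ref{donnellydoubling}), and the Logunov--Malinnikova Remez inequality (Theorem~\ref{Remezlog}) to show that each bad set $\{|\phi_{k_j}| < e^{-C\la_{k_j}}\}$ occupies at most a $\frac{1}{2n}$ fraction of a fixed half-cube, then applies a union bound over $j \leq n$ to produce a positive-measure set $E$ on which every factor is simultaneously $\geq e^{-C\la_{k_j}}$, and concludes from $\|f\|_{L^2(E)} \geq \mu(E)^{1/2}\inf_E|f|$. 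That union bound is precisely what forces the $n$-dependence in the paper's $C_4$; your Jensen-plus-layer-cake aggregation decouples the $n$ factors at the level of $\int_M\log|\phi_{k_j}|$ and sidesteps it. Both routes, though, rest on the same deep input, and you should name it precisely: the sublevel-set bound $|\{|\phi_k| \leq t\}| \leq C\,t^{1/(C'\la_k)}$ is \emph{not} a consequence of the doubling inequality plus the H\"ormander sup-norm bound alone. It requires (i) the Logunov--Malinnikova quantitative Remez inequality applied to the lift $h_k(x,y)=\phi_k(x)e^{\la_k y}$, which solves a divergence-form elliptic equation on $M\times\R$; (ii) the doubling inequality to control the doubling index $N(h_k,Q)$ by $\la_k$; and (iii) the Donnelly--Fefferman inf--sup estimate $\sup_{B(p,R_4)}|\phi_k| \geq e^{-C\la_k}\|\phi_k\|_{L^\infty(M)}$ to renormalize, since the Remez bound is stated relative to $\sup_Q$ of the solution. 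Item (i) is the modern and nontrivial ingredient, and on a merely smooth (non-analytic) manifold---which is the hypothesis of the Lemma---there is no substitute for it.
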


\begin{remark}
    We note that on $\mathbb S^2$, the $L^2$ norm of the product of  $\Re (x+iy)^k$, $\Re (x+iz)^k$ and $\Re (y+iz)^k$ decays exponentially fast in $k$.
\end{remark}


In our last result, we show, by combining Corollary \ref{corollary} and Lemma \ref{propositionlowerboundbegin}, that one can recover most of the $L^2$ mass of a product of Laplace-Beltrami eigenfunctions using only finitely many terms in the Fourier expansion:

\begin{theorem}\label{approxbyfinitenumber}
 Let $(M, g)$ be a real-analytic closed Riemannian manifold of dimension $d$. Let $\{\phi_k\}$ be an orthonormal basis of Laplace-Beltrami eigenfunctions on $M$ with eigenvalues $\la_k^2$ in increasing order. Let $n\geq 2$, consider a finite set of positive integers $\{{k_1}, \dots, {k_n}\}$. Denote $f:=\prod_{j \leq n}\phi_{k_j}$ and denote by $c_i:=\left\langle f, \phi_i\right\rangle_{L^2(M)}$ its Fourier coefficients.  There exists $C_5(M,n)$ such that $99\%$ of the $L^2$ mass of $f$ is concentrated in the following way:
 \begin{align}\label{approxfinitenumber2}
    \left\|\sum_{i \in A_n} c_i \phi_{i}\right\|_{L^2(M)} \geq 0.99 \left\|f\right\|_{L^2(M)} \quad \quad \mbox{ where }  A_n:=\left\{i: \la_i \leq C_5 \sum\limits_{j \leq n} \la_{k_j} \right\}.  
    \end{align}
   
\end{theorem}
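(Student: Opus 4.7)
My plan is to show that the tail $\sum_{i\notin A_n}c_i^2$ is negligible compared to $\|f\|_{L^2(M)}^2$, by combining the exponential upper bound of Corollary~\ref{corollary} with the exponential lower bound of Lemma~\ref{propositionlowerboundbegin}, and then invoking Parseval.

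Write $\Lambda:=\sum_{j\leq n}\la_{k_j}$ and $B_n:=\{i:\la_i>C_5\Lambda\}$. By Parseval, the conclusion \eqref{approxfinitenumber2} is equivalent to $\sum_{i\in B_n}c_i^2\leq (1-0.99^2)\|f\|_{L^2(M)}^2$. Choose $C_5\geq C_1$ so that Corollary~\ref{corollary} applies on $B_n$; then $c_i^2\leq e^{-2C_2\la_i}$ there. Using Weyl's law $\#\{i:\la_i\leq s\}\leq K(M)(1+s^d)$ and a standard Abel summation, one obtains
\begin{equation}
\sum_{i\in B_n}c_i^2\ \leq\ \sum_{\la_i>C_5\Lambda}e^{-2C_2\la_i}\ \leq\ K'(M)\bigl(1+(C_5\Lambda)^d\bigr)e^{-2C_2 C_5\Lambda}.
\end{equation}
Combined with $\|f\|_{L^2(M)}^2\geq C_3^2\,e^{-2C_4\Lambda}$ from Lemma~\ref{propositionlowerboundbegin}, the ratio of tail to total mass is bounded by
\begin{equation}
\frac{K'(M)}{C_3^2}\bigl(1+(C_5\Lambda)^d\bigr)\,e^{-2(C_2 C_5-C_4)\Lambda}.
\end{equation}

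To finish, observe that $\Lambda\geq n\la_1>0$ since each $k_j\geq 1$ and $M$ is closed. For $C_5>C_4/C_2$ the exponent is strictly negative in $\Lambda$; an elementary calculus check shows that once $C_5$ exceeds a threshold depending only on $C_2,C_4,d,n,\la_1$ (hence on $M$ and $n$), the function $\Lambda\mapsto(1+(C_5\Lambda)^d)e^{-2(C_2 C_5-C_4)\Lambda}$ is monotonically decreasing on $[n\la_1,\infty)$, so its supremum there equals its value at $\Lambda=n\la_1$. That boundary value then tends to $0$ as $C_5\to\infty$ (exponential beating polynomial in $C_5$), so we may pick $C_5=C_5(M,n)$ large enough to force the ratio below $1-0.99^2$.

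The only real technical subtlety is the interaction between the polynomial prefactor $(C_5\Lambda)^d$ coming from Weyl's law and the exponential lower bound: for very small $\Lambda$ a naive estimate could be spoiled. The a priori floor $\Lambda\geq n\la_1$, together with the freedom to let $C_5$ depend on $n$, resolves this cleanly; no delicate Weyl-remainder analysis is required.
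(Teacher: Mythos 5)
Your proof is correct and follows the same high-level strategy as the paper: use Parseval to reduce to bounding the tail sum $\sum_{i\notin A_n}c_i^2$, bound each $c_i^2$ from above via Corollary~\ref{corollary}, bound $\|f\|^2$ from below via Lemma~\ref{propositionlowerboundbegin}, and choose $C_5$ large enough to make the ratio small. Where you diverge is in how the tail sum $\sum_{\la_i>C_5\Lambda}e^{-2C_2\la_i}$ is controlled. You go the explicit route: invoke Weyl's law in the counting form $\#\{i:\la_i\leq s\}\lesssim 1+s^d$, perform an Abel summation, and obtain a bound of the type $(1+(C_5\Lambda)^d)\,e^{-2C_2C_5\Lambda}$. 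You then need a monotonicity check (plus the floor $\Lambda\geq n\la_1$) to see that the resulting polynomial-times-exponential expression is uniformly small. The paper sidesteps this entirely with a cleaner trick: split $e^{-2C_2\la_i}=e^{-C_2\la_i}\cdot e^{-C_2\la_i}$, pull out the uniform factor $e^{-C_2\la_i}\leq e^{-C_2C_5\Lambda}$ valid on $A_n^c$, and bound what remains by the fixed convergent sum $S:=\sum_{i\geq 1}e^{-C_2\la_i}<\infty$ (where Weyl is needed only to know $S$ is finite, not for any quantitative count). This gives $\sum_{i\in A_n^c}e^{-2C_2\la_i}\leq S\,e^{-C_2C_5\Lambda}$, with no polynomial prefactor and no Abel summation, at the minor cost of a factor $e^{-C_2C_5\Lambda}$ instead of your sharper $e^{-2C_2C_5\Lambda}$. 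Both routes are valid; the paper's is somewhat more economical in what it asks of Weyl's law and avoids the calculus check on monotonicity, while yours gives a marginally tighter tail estimate.
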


\begin{remark}
    To fix the ideas, we chose $0.99$ on the right-hand side of \eqref{approxfinitenumber2}, but any other constant can be chosen. Note that it will yield a different value for $C_5$.
\end{remark}

Theorem \ref{maintheorem} complements the results cited above, however, our method of proof is quite different: as opposed to estimates on automorphic forms in  \cite{Sarnak}, \cite{BernsteinReznikov}, \cite{KrotzStanton} and microlocal estimates on the complex half-wave propagator in \cite{Zelditch}, we use elliptic estimates on harmonic functions and a quantitative version of Cauchy-Kovalevskaya's theorem to prove Theorem \ref{maintheorem}. As such, our method does not require the use of microlocal analysis. 

Lemma \ref{propositionlowerboundbegin} improves on a classical result of Donnelly and Fefferman \cite[Proposition 4.1]{Donnelly1988} which states that the supremum of a single eigenfunction cannot be too small on a fixed set. The main step in the proof of Lemma \ref{propositionlowerboundbegin} involves finding a large set where every eigenfunction in the product is not too small. To do so, we use a recent quantitative version of Remez inequality \cite[Lemma 4.2]{LogRemez}. 


\section{Acknowledgements}
We would like to thank Sasha Logunov for suggesting this problem to us. P.C. was funded by the SNSF. F.P. was funded by NCCR SwissMAP and by the SNSF. 
\section{Definitions and notation}\label{defsection}

\subsection{The manifold:} We will denote by $(M,g)$ or sometimes only by $M$, a closed real-analytic Riemannian manifold of dimension $d$. 
We will denote the metric on $M \times \R$ by $g'$. For $p \in M$, $ O_p \subset \R^d$ will denote a normal chart of $M$ such that the image of $p$ is $0$ and $\psi_p : M \to O_p$ will be the map associated with $O_p$.

Let $R_{inj}$ be the injectivity radius of $M$. We note that $R_{inj}>0$ since $M$ is a compact Riemannian manifold without boundary. 

\subsection{Multi-indices}
Let $\alpha \in \N^k$, with $k >0$. We denote by $\alpha_j$ the $j$-th component of $\alpha$. 

\subsection{Partial derivatives}

Let $l >0$, $k >0$. 
We define $\partial_{x_k}^l := \frac{\partial^l}{\partial {x_k}^l}$ and  $\partial_{z_k}^l := \frac{\partial^l}{\partial {z_k}^l}$. If $l=1$ we write $\partial_{x_k}$ or $\partial_{z_k}$. We also define $\partial_x^\alpha:= \partial_{x_1}^{\alpha_1}\partial_{x_2}^{\alpha_2}\ldots \partial_{x_k}^{\alpha_k}$ and $\partial_z^\alpha:= \partial_{z_1}^{\alpha_1}\partial_{z_2}^{\alpha_2}\ldots \partial_{z_k}^{\alpha_k}$.

\subsection{Polydisks} We will work with real and complex polydisks. For $k = d$ or $d+1$, we denote
   \begin{align}\label{defpolyclassical}
   PD_{\R^k}(R):=\{y \in \R^k: |y_i |< R, \quad 1\leq i\leq k \}.
   \end{align}

We will also use polydisks stretched in the last variable: 
\begin{align}\label{defpolystretched}
   PD_{\R^k}(R, \delta):=\{y \in \R^k: |y_i|< R,\quad 1\leq i\leq k-1, \quad |y_k|\leq R\delta \}.
\end{align} 

 We define $PD_{\C^k}(R)$ and $PD_{\C^{k}}(R, \delta)$ in the same manner.

\subsection{Power series}\label{powerseries}
We will say that a function $f$ has a power series expansion at $0$ that converges on a polydisk $PD_{\R^d}( r_0)$, $r_0>0,$ if for all $x \in PD_{\R^d}( r_0),$ 
$$
f(x) = \sum_{\alpha} \frac{\partial^{\alpha}_x f(0)}{\alpha!} x^{\alpha}
$$
with absolute convergence. It implies that $f$ has a holomorphic extension $f^\C$ to the polydisk $PD_{\C^d}(r_0)$ given by
$
f^\C(z) := \sum_{\alpha} \frac{\partial^{\alpha}_x f(0)}{\alpha!} z^{\alpha}
$
for all $z \in PD_{\C^d}(r_0)$, with absolute convergence. Furthermore, for any $x \in PD_{\R^d}( r_0)$ and $\beta \in \N^d$, $\partial_x^\beta f (x) = \partial_z^\beta f^\C (x)$.

\subsection{Laplace-Beltrami operator}

On a chart $O_p\subset \R^d$ of $M$ around $p \in M$, we denote the positively-defined Laplace-Beltrami operator  by $\Delta_g := -\text{div}_g\cdot \nabla_g$. Throughout the paper, if the chart $O_p$ is fixed we will use the convention $$\Delta_g := \sum\limits_{|\alpha| \leq 2} a_\alpha(x)\partial_x^\alpha, \quad x \in O_p.$$ 
On a fixed chart $O_p\times \R$ on the product manifold $M\times \R$ with metric $g'$, the positively-defined Laplace operator will be denoted by $$    \Delta_{g'} := \sum\limits_{|\alpha|\leq 2} a_{\alpha}(x) \partial^{\alpha}_x - \partial^2_{x_{d+1}}, \quad (x, x_{d+1}) \in O_p\times \R. $$
We will also consider the sequence $\phi_j$ of $L^2(M)$ normalized eigenfunctions of $\Delta_g$
\begin{align*}
    \Delta_g \phi_j = \lambda_j^2 \phi_j, \quad \|\phi_j\|_{L^2(M)}=1
\end{align*}
with eigenvalues arranged in increasing order.

\section{Proof of Theorem \ref{maintheorem}}

\subsection{Outline of the proof}

The proof of Theorem \ref{maintheorem} naturally decomposes into two parts. The first part (sections \ref{sectionradiusholoextencoeff} to \ref{backrealsection}) consists of finding a harmonic extension of a product of eigenfunctions with quantitative estimates. 

We note that the existence and uniqueness of the harmonic extension are guaranteed by the theorem of Cauchy-Kovalevskaya. However,  the classical real-valued version of this theorem does not give any (quantitative) information on the extension nor on the size of the domain where this extension is defined. On the other hand, there exists a lesser-known complex version of Cauchy-Kovalevskaya which includes quantitative estimates on both the supremum of this extension as well as on the size of the domain on which this extension is defined.

In Section \ref{sectionradiusholoextencoeff}, we show that for each $p \in M$, there is a normal chart at $p$ which includes a polydisk in which the coefficients of $\Delta_g$ have a bounded holomorphic extension. 

In Section \ref{sectionholoextenlaplaceefct}, we show that on each chart, there exists a possibly smaller polydisk in which the Laplace-Beltrami eigenfunctions have a bounded holomorphic extension.  We also estimate the supremum of the holomorphic extension of an eigenfunction in a fixed complex polydisk in terms of its eigenvalue. 

The results from these two previous sections now allow us to consider a complex Cauchy problem. In Section \ref{harmext}, we show, using a complex version of Cauchy-Kovalevskaya's theorem, the existence of a unique holomorphic harmonic extension of the product of eigenfunctions on a complex polydisk. We also find a quantitative estimate on the supremum of the extension and on the size of the polydisk in which this extension is well-defined.

In Section \ref{backrealsection}, we go back to the real setting and finish the construction. 

 In the second part (Section \ref{mainstepsection}), we use this harmonic extension together with Green's formula to estimate the Fourier coefficients.

\subsection{Radius of holomorphic extension of the coefficients of the Laplacian}\label{sectionradiusholoextencoeff}

    In this section, we show that for each $p \in M$, there is a chart $O_p$ containing a real polydisk of fixed radius (i.e., independent of $p$) and a complex extension of $\Delta_g$ on the complexification of this polydisk. More precisely, we have the following Proposition:

\begin{proposition}\label{propholomorphicextension}
        Let $(M,g)$ be a closed real-analytic manifold of dimension $d$. 
        There exists $R_1(M)$ such that for any $p \in M$, there exists a chart $O_p$ with the following properties:
        \begin{itemize}
            \item The polydisk $PD_{\R^d}(R_1)$ is fully contained in $O_p$.
            \item Let $\Delta_g=\sum_{|\alpha|\leq 2} a_{\alpha}\partial^{\alpha}_x$ be the local expression for the Laplace-Beltrami operator. Then, for any multi-index $|\alpha| \leq 2$, the function $a_\alpha(x)$, initially defined on $PD_{\R^d}(R_1)$, admits a bounded holomorphic extension $a_\alpha(z)$ to the complex polydisk $PD_{\C^d}(R_1)$.
        \end{itemize}
As a consequence, we can define the operator $L_g := \sum_{|\alpha|\leq 2} a_\alpha(z) \partial_z^\alpha$ as a differential operator with bounded analytic coefficients on $PD_{\C^d}(R_1)$.
        
    \end{proposition}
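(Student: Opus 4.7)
The plan is to use normal coordinates at each $p$ and show that the metric, and hence the coefficients of $\Delta_g$, admits a bounded holomorphic extension to a polydisk whose radius is bounded below uniformly in $p$. First I would fix the normal chart $\psi_p$ at $p$, given by the exponential map. Since $(M,g)$ is real-analytic, geodesics depend analytically on their initial data (by the analytic dependence of ODE solutions on parameters), so $\exp_p$ is real-analytic on the ball of radius $R_{inj}$ in $T_pM$. Consequently the pulled-back metric coefficients $g_{ij}(x)$ are real-analytic on $O_p$, with $g_{ij}(0)=\delta_{ij}$.

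The central step is to upgrade this pointwise real-analyticity into a uniform holomorphic extension. For each fixed $p$, the Taylor series of $g_{ij}$ at $0$ converges on some polydisk $PD_{\R^d}(r(p))$ with $r(p)>0$. I would show $\inf_{p\in M} r(p) > 0$ by covering $M$ with finitely many analytic charts $(V_k,\varphi_k)$; on each $V_k$, the metric components and their inverses admit holomorphic extensions of definite radius, with uniform bounds on a compact subset. For $p \in V_k$, the change of coordinates from $\varphi_k$ to normal coordinates at $p$ is itself real-analytic with derivatives bounded uniformly in $p$ (again by compactness of $M$), so one can pull back the complex extension to a complex polydisk of uniform radius around the origin in normal coordinates. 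This yields $r_0 > 0$ and $M_0 < \infty$, depending only on $M$, such that $g_{ij}$ extends holomorphically to $PD_{\C^d}(r_0)$ with $|g_{ij}(z)| \le M_0$ uniformly in $p$.

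With this in hand, I would produce $R_1$ from two further requirements. Since $g_{ij}(0)=\delta_{ij}$, the Cauchy estimates in $PD_{\C^d}(r_0)$ bound the derivatives of $\det g_{ij}(z)$ at $0$ uniformly, so shrinking to a suitable $R_1 \in (0,r_0]$ gives $|\det g_{ij}(z)-1| \le 1/2$ on $PD_{\C^d}(R_1)$ for every $p$. This makes $g^{ij}(z) := (g_{ij}(z))^{-1}$ holomorphic and uniformly bounded on $PD_{\C^d}(R_1)$. I would also take $R_1 \sqrt{d} < R_{inj}$ so that $PD_{\R^d}(R_1)$ sits inside $O_p$. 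Finally, since
\[
\Delta_g u = -\tfrac{1}{\sqrt{\det g}}\, \partial_{x_i}\!\left( \sqrt{\det g}\, g^{ij}\, \partial_{x_j} u\right)
\]
(summation in $i,j$), the coefficients $a_\alpha(x)$ are polynomials in $g_{ij}$, $g^{ij}$, $(\det g)^{\pm 1/2}$, and their first derivatives, hence extend holomorphically and are uniformly bounded on $PD_{\C^d}(R_1)$, defining $L_g$.

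The main obstacle is the uniform radius step: pointwise real-analyticity of $g_{ij}$ in normal coordinates is immediate, but extracting a single $r_0$ valid for every $p \in M$ genuinely uses compactness, and a direct invocation of analyticity does not produce uniform constants. A more high-powered alternative is to invoke the existence of a Grauert tube of positive radius around $M$ in its complexification, which automatically furnishes a uniform complex neighborhood on which $g$ extends holomorphically; the covering argument above is more self-contained and delivers $R_1$ by hand.
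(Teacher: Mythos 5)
Your proposal follows essentially the same route as the paper: pass to normal coordinates, use real-analyticity of the metric plus compactness of $M$ to get a uniform radius of holomorphic extension for $g_{ij}$, and then read off that the $a_\alpha$ extend holomorphically since they are built from $g$, $g^{-1}$, $\det g$ and first derivatives. You are somewhat more careful than the paper in two spots that it only asserts — the finite-covering argument that $\inf_p r(p)>0$, and the Cauchy-estimate step keeping $\det g$ bounded away from zero on the complex polydisk so that $g^{ij}$ and $(\det g)^{-1/2}$ remain holomorphic — so yours is the same argument with those details filled in.
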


\begin{proof}
 Let $O_p=B_{R_{inj}}$ a normal coordinate chart. 
We immediately see that $PD_{\R^d}\left(\frac{R_{inj}}{\sqrt{d}}\right) \subset B_{R_{inj}}$. Since the metric is analytic, for each $p \in M$ the coefficients of the metric (in normal coordinates around $p$) can be expressed as a Taylor series which converges absolutely on a polydisk of radius $R'(p)$, but does not converge absolutely on any larger polydisk. We set 
\begin{align*}
    R':=\inf_{p \in M} R'(p)>0.
\end{align*}

 We know that each coefficient $a_\alpha$ is determined only by the matrix $g$, its derivatives and its inverse. Therefore, the radius of convergence of the Taylor series of $a_\alpha$ is the same as the radius of convergence of the coefficients of $g$. This implies that every coefficient $a_\alpha$ has a holomorphic extension to $PD_{\C^d}(R')$.

Denote  $R'' := \min\left(\frac{R_{inj}}{\sqrt{d}}, R'\right)$. By construction, $PD_{\R^d}(R'') \subset O_p$ and each coefficient $a_\alpha(x)$ has a holomorphic extension $a_\alpha(z)$ in $PD_{\C^d}(R'')$. Setting $R_1 := \frac{R''}{2}$ ensures that each $a_\alpha$ is bounded and holomorphic in $PD_{\C^d}(R_1)$. This completes the proof of Proposition \ref{propholomorphicextension}.
\end{proof}

\subsection{Holomorphic extension of Laplace eigenfunction}\label{sectionholoextenlaplaceefct}

On a given chart $O_p$, we wish to holomorphically extend the Laplace-Beltrami eigenfunctions into complex polydisks.

  \begin{proposition}\label{lemmasupcomp}
     Let $R_1$ be given by Proposition \ref{propholomorphicextension}. There exists  $R_2(M)<R_1$ and $C_6(M)$ and $ C_7(M)$ such that for any $p \in M$, every eigenfunction $\phi_k$ of $\Delta_g$ in a chart $O_p$ has a holomorphic extension in local coordinates on $ PD_{\C^d}(R_2) $ with the following estimate:
\begin{equation}\label{poitwiseestimatenovenocegood}
        \sup_{PD_{\C^d}(R_2)} |\phi_k| \leq C_6 \e^{R_1\lambda_k} (C_7\la_k)^{\frac{d-1}{2}} \, .
    \end{equation}
  \end{proposition}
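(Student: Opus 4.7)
The plan is to use the standard trick of harmonic extension to one extra dimension: define
\[
\Phi_k(x,t) := \phi_k(x)\cosh(\lambda_k t),
\]
which a direct computation shows satisfies $\Delta_{g'}\Phi_k = 0$ on $M\times\mathbb{R}$ equipped with the product metric. Once $\Phi_k$ is extended holomorphically to a complex polydisk in $\mathbb{C}^{d+1}$, restricting to the slice $\{t=0\}$ will yield the desired holomorphic extension of $\phi_k$ to a polydisk in $\mathbb{C}^d$.

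First, I would establish a real-side sup bound on $\Phi_k$ over $PD_{\mathbb{R}^{d+1}}(R_1)$, viewed inside the product chart $O_p\times\mathbb{R}$. By H\"ormander's $L^\infty$ bound for Laplace--Beltrami eigenfunctions on a closed Riemannian manifold, $\|\phi_k\|_{L^\infty(M)} \leq C(M)\lambda_k^{(d-1)/2}$ (the cases $\lambda_k$ small being absorbed into the multiplicative constant). Combined with $\cosh(R_1\lambda_k)\leq e^{R_1\lambda_k}$ this gives
\[
\sup_{PD_{\mathbb{R}^{d+1}}(R_1)} |\Phi_k| \;\leq\; C(M)\, e^{R_1\lambda_k}\lambda_k^{(d-1)/2}.
\]

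Next I would apply Proposition~\ref{propholomorphicextension} to the product manifold $M\times\mathbb{R}$: the coefficients of $\Delta_{g'}$ are bounded and holomorphic on $PD_{\mathbb{C}^{d+1}}(R_1)$, and the operator is elliptic uniformly on a neighborhood of the real polydisk. Because $\Phi_k$ is a solution of the homogeneous equation $\Delta_{g'}\Phi_k=0$, quantitative analytic interior regularity for elliptic operators with holomorphically bounded analytic coefficients yields constants $R_2<R_1$ and $C_0$, depending only on $M$, such that $\Phi_k$ extends holomorphically to $PD_{\mathbb{C}^{d+1}}(R_2)$ with
\[
\sup_{PD_{\mathbb{C}^{d+1}}(R_2)}|\Phi_k^{\mathbb{C}}| \;\leq\; C_0 \sup_{PD_{\mathbb{R}^{d+1}}(R_1)}|\Phi_k|.
\]
Setting $\phi_k^{\mathbb{C}}(z):=\Phi_k^{\mathbb{C}}(z,0)$ then produces the desired holomorphic extension to $PD_{\mathbb{C}^d}(R_2)$, and combining the last two displays (absorbing the constants arising from H\"ormander's bound into $C_6$ and $C_7$) gives \eqref{poitwiseestimatenovenocegood}.

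The main obstacle is the quantitative analytic regularity step, namely producing the holomorphic extension of a solution of an elliptic equation with analytic coefficients on a complex polydisk with sup controlled by the real sup on a slightly larger polydisk, uniformly in $k$. This can be handled either by iterated Cauchy-type estimates \`a la Morrey--Nirenberg (bounding $|\partial^\alpha\Phi_k|\leq C \alpha! R^{-|\alpha|}\|\Phi_k\|_{L^\infty}$ and summing the Taylor series), or by the complex Cauchy--Kovalevskaya argument developed later in Section~\ref{harmext}; either way, the crucial input is the uniform (in $p\in M$) size of the polydisk $PD_{\mathbb{C}^{d+1}}(R_1)$ on which the coefficients of $\Delta_{g'}$ are bounded and holomorphic, which is exactly what Proposition~\ref{propholomorphicextension} provides.
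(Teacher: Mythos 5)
Your proposal follows essentially the same route as the paper, which simply cites two black-box results: Lin's holomorphic extension estimate (Lemma~\ref{holoextensionhormanderanalyticity}, which already contains the $\cosh(\lambda_k t)$ harmonic-extension trick and the analytic interior regularity step you outline) and the Donnelly/H\"ormander $L^\infty$ bound (Lemma~\ref{lemmaforlinfinityefct}). So you are reconstructing the proof of the first cited lemma rather than proving a new argument, and that reconstruction is sound: $\Phi_k(x,t)=\phi_k(x)\cosh(\lambda_k t)$ is $\Delta_{g'}$-harmonic and eliminates the $\lambda_k$-dependence from the coefficients of the operator, so Morrey--Nirenberg-type analytic interior regularity (with constants depending only on the ellipticity and the analytic bounds on the coefficients, hence only on $M$) gives the holomorphic extension of $\Phi_k$, and restricting to $t=0$ yields $\phi_k^{\mathbb{C}}$ with the factor $e^{R_1\lambda_k}$ absorbed from $\cosh$. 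That matches the paper's \eqref{estimateefunctionextension1main} combined with \eqref{poitwiseestimatenovenocegood}.

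One caveat: your suggested alternative route via the complex Cauchy--Kovalevskaya argument of Section~\ref{harmext} does not actually close the gap here. That argument takes as \emph{input} Cauchy data $\prod\phi_{k_j}$ on $\{z_{d+1}=0\}$ that is already holomorphic and bounded on a complex polydisk $PD_{\mathbb{C}^d}(R_2)$ --- i.e.\ it presupposes Proposition~\ref{lemmasupcomp}. Trying to obtain Proposition~\ref{lemmasupcomp} from that argument would be circular: CK produces a solution to a Cauchy problem from already-complexified data, it does not prove that a given real-analytic solution of an elliptic PDE has a holomorphic extension with quantitative control. The Morrey--Nirenberg iterated-Cauchy-estimate route you mention first is the correct one, and is indeed the content of Lin's Remark (d) which the paper cites.
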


\begin{remark}\label{remarkrtwocomplex}
We emphasize that the coefficients $a_{\alpha}$ of the operator $\sum_{|\alpha|\leq 2} a_{\alpha}(x) \partial^{\alpha}_x$ are holomorphic and bounded in the polydisk $PD_{\C^d}(R_2)$ and every eigenfunction $\phi_k$ has a bounded holomorphic extension to the polydisk $PD_{\C^d}(R_2)$.    
\end{remark}

The proof of Proposition \ref{lemmasupcomp} immediately follows from the next two Lemmas: 

\begin{lemma}[\cite{Lin}, Remark (d)]\label{holoextensionhormanderanalyticity}
 Let $R_1$ be given by Proposition \ref{propholomorphicextension}. There exists  $R_2(M)<R_1$ and $C_6(M)$ such that for any $p \in M$, every eigenfunction $\phi_k$ of $\Delta_g$ with eigenvalue $\la_k ^2$ in a chart $O_p$ has a holomorphic extension in local coordinates on $ PD_{\C^d}(R_2) $ with the following estimate:
    \begin{align}\label{estimateefunctionextension1main}
        \sup_{PD_{\C^d}(R_2)} |\phi_{k}| \leq C_6 \e^{R_1\la_k} \sup_{PD_{\R^d}(R_1)} |\phi_{k}|.
    \end{align}
\end{lemma}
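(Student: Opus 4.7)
The plan is to derive quantitative pointwise bounds on all derivatives of $\phi_k$ at an arbitrary real point via an iterated Morrey--Nirenberg/H\"ormander analyticity estimate, and then to bound the corresponding Taylor series termwise. This will simultaneously give the holomorphic extension and the $L^\infty$ control. The decisive point is to track how the analyticity radius $R_1$ of the coefficients $a_\alpha$ (furnished by Proposition \ref{propholomorphicextension}) enters all the constants, since this is precisely what produces the exponent $R_1\la_k$ in the stated bound.

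First I would apply an iterated elliptic-regularity argument to the equation $(\Delta_g - \la_k^2)\phi_k = 0$. Since the coefficients $a_\alpha$ are holomorphic and uniformly bounded on $PD_{\C^d}(R_1)$, their derivatives obey Cauchy estimates on any shrunk polydisk. Feeding this into the Morrey--Nirenberg scheme yields a constant $A = A(M) > 0$ such that for every $x_0 \in PD_{\R^d}(R_1/2)$ and every multi-index $\beta$,
\begin{equation*}
|\partial_x^\beta \phi_k(x_0)| \leq (A/R_1)^{|\beta|+1}(|\beta| + R_1\la_k)^{|\beta|}\sup_{PD_{\R^d}(R_1)}|\phi_k|.
\end{equation*}
The factor $1/R_1$ arises from rescaling the standard unit-polydisk analyticity estimate to one of radius $R_1$; the dimensionless combination $R_1\la_k$ is the rescaled eigenvalue and reflects the fact that two derivatives of $\phi_k$ can be traded against the zeroth-order term $\la_k^2\phi_k$.

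With this derivative bound in hand, I would form the Taylor series of $\phi_k$ centered at $x_0$ and control it on a polydisk of radius $R_2$ centered at $x_0$. Using the multinomial identity $\sum_{|\beta|=n} R_2^{|\beta|}/\beta! = (dR_2)^n/n!$ together with the Stirling estimate $(n+R_1\la_k)^n/n! \leq \e^{n+R_1\la_k}$, the series is dominated by
\begin{equation*}
(A/R_1)\,\e^{R_1\la_k}\,\sup_{PD_{\R^d}(R_1)}|\phi_k|\,\sum_{n\geq 0}\bigl(AdR_2 \e/R_1\bigr)^n.
\end{equation*}
Fixing $R_2 = R_2(M) < R_1$ small enough that $AdR_2 \e/R_1 \leq 1/2$ makes the geometric series converge to a constant $C_6 = C_6(M)$, giving a holomorphic extension to the polydisk centered at $x_0$ with the required bound. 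Covering $PD_{\R^d}(R_2)$ around the origin by finitely many such centers (a number depending only on $M$) and invoking uniqueness of analytic continuation on overlaps stitches these local extensions into a single holomorphic extension on $PD_{\C^d}(R_2)$, as stated.

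The hard part is the derivative estimate in the first step: obtaining the precise scaling $(|\beta| + R_1\la_k)^{|\beta|}$ rather than a weaker $(C(|\beta| + \la_k))^{|\beta|}$ is exactly what produces $R_1\la_k$ (rather than some uncontrolled $C\la_k$) in the final exponent. Proving this requires carefully propagating the analyticity radius of the coefficients through the iterated elliptic regularity and keeping track of the combinatorial factorials introduced at each step. This is precisely the content of the quantitative H\"ormander-type statement in the reference cited in the lemma; once it is taken as input, Steps 2 and 3 above are essentially bookkeeping with Stirling and the multinomial theorem.
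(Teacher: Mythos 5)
The paper does not prove this lemma internally --- it is cited directly from Lin (Remark (d)), with Donnelly--Fefferman also referenced, so there is no in-paper argument to compare yours against; what you have written is a reconstruction. Your Taylor-series bookkeeping in Steps 2--3 is correct (the multinomial identity, the Stirling bound $(n+R_1\la_k)^n/n! \leq e^{n+R_1\la_k}$, and shrinking $R_2$ so the geometric ratio is at most $1/2$), and the closing covering argument is unnecessary, since a single Taylor expansion at $x_0 = 0$ already produces the extension on $PD_{\C^d}(R_2)$. However, the entire weight of the argument is carried by the quantitative derivative estimate asserted in Step 1, which you explicitly defer to the reference rather than derive; as written the proposal is a reduction to that estimate, not a proof. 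It is also worth flagging that the references do not establish that derivative bound by iterating directly on $(\Delta_g - \la_k^2)\phi_k = 0$ as you propose. The standard device --- and the one this very paper uses elsewhere (Sections 5 and on) --- is to form the harmonic function $h_k(x,y) = \phi_k(x)e^{\la_k y}$ on $M\times\R$, which satisfies an eigenvalue-free elliptic equation with the same analytic coefficients, apply the Morrey--Nirenberg analyticity estimate to $h_k$ to get its holomorphic extension, and then restrict to $y = 0$; the exponent $R_1\la_k$ then drops out of $\sup |e^{\la_k y}|$ over a polydisk of radius comparable to $R_1$ in the $y$-variable, rather than from a Stirling computation. Your direct iteration is viable in principle, but tracking the factor $(|\beta| + R_1\la_k)^{|\beta|}$ through every elliptic-regularity step is more delicate than the eigenvalue-free route, and this is precisely the step you leave unproved.
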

Lemma \ref{holoextensionhormanderanalyticity} shows the holomorphic extension of Laplace eigenfunctions on analytic manifolds with an estimate. It has been proved by Donnelly and Fefferman  \cite{Donnelly1988} and by Lin \cite{Lin}, who used it to estimate the size of nodal sets of eigenfunctions on analytic manifolds. Here we refer to the simpler proof given by Lin.

\begin{lemma}[\cite{Donnelly}, Theorem 1.6]\label{lemmaforlinfinityefct}
    Let $\phi_k$ be an eigenfunction of $\Delta_g$ with eigenvalue $\la_k^2$. There exists $C_7(M)$ such that $\sup\limits_M |\phi_{k}|\leq  (C_7\la_k)^{\frac{d-1}{2}}$.
\end{lemma}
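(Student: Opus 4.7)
The plan is to obtain the pointwise $L^\infty$ bound as a corollary of the local (pointwise) Weyl law for the spectral projector. Specifically, I would first establish the estimate
\begin{equation*}
\sum_{\la \leq \la_k \leq \la+1} |\phi_k(x)|^2 \leq C \la^{d-1}
\end{equation*}
uniformly in $x \in M$. Since every summand on the left is nonnegative, keeping only a single term corresponding to a fixed index $k$ gives $|\phi_k(x)|^2 \leq C \la_k^{d-1}$ for all $x \in M$, which, after taking a supremum and square root, is exactly the content of the lemma.

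To prove the local Weyl estimate, I would follow H\"ormander's classical half-wave parametrix method. Choose $\rho \in \mathcal{S}(\R)$ with $\rho \geq 0$, $\rho(0) = 1$, and $\hat{\rho}$ compactly supported in a small interval $(-\epsilon, \epsilon)$, where $\epsilon$ is chosen smaller than $R_{inj}/2$. Smearing the spectral function against $\rho$ gives
\begin{equation*}
\sum_k \rho(\la_k - \la)\, |\phi_k(x)|^2 = \frac{1}{2\pi} \int_{\R} \hat{\rho}(t)\, e^{it\la}\, K(t,x,x)\, dt,
\end{equation*}
where $K(t,x,y) = \sum_k e^{-it\la_k}\phi_k(x)\phi_k(y)$ is the Schwartz kernel of the half-wave propagator $e^{-it\sqrt{\Delta_g}}$. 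Because $\hat{\rho}$ has small time support and $\epsilon$ is below the injectivity radius, only the behaviour of $K$ near $t=0$ contributes, and one can represent $K$ by a Hadamard-type Fourier integral parametrix on this time interval.

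Applying stationary phase to the resulting oscillatory integral in the cotangent fibre variable produces a leading contribution of order $\la^{d-1}$ with a lower-order error. By the compactness of $M$, the symbols and phases that appear in the parametrix depend smoothly on $x$ and are bounded uniformly, so the constant $C$ is independent of $x$. Combined with a positivity/TAuberian argument to go from $\rho$-smeared sums to sharp spectral windows $[\la, \la+1]$, this yields the local Weyl bound and hence the lemma, with $C_7$ absorbing all geometric constants arising from the parametrix.

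The main obstacle is the parametrix construction itself: writing $e^{-it\sqrt{\Delta_g}}$ as a Fourier integral operator with uniformly controlled phase and amplitude for $|t| \leq \epsilon$ and all $x \in M$. Once this microlocal input is secured, the stationary phase and Tauberian steps are routine. An alternative route would be to derive the $L^\infty$ bound from Sogge's $L^p$ spectral cluster estimates, but these ultimately rest on the same parametrix analysis.
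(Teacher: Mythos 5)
Your proposal is a correct proof: it reproduces H\"ormander's classical argument via a half-wave parametrix and the pointwise local Weyl law, which is exactly the content of one of the two references the paper lists for this lemma (\cite{Hormanderlinfinitybounds}; the Sogge route you mention as an alternative is the other, \cite{Soggelinfinity}). However, the paper deliberately cites Donnelly \cite{Donnelly} rather than either of these, and says why: Donnelly's proof uses only elliptic estimates and avoids microlocal analysis, in keeping with the paper's stated aim that ``our method does not require the use of microlocal analysis.'' Your argument therefore proves the lemma, but it does so by constructing a Fourier integral parametrix for $e^{-it\sqrt{\Delta_g}}$ and running stationary phase together with a Tauberian step --- precisely the machinery the authors are at pains to avoid elsewhere in the paper. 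What the microlocal route buys is a sharper, two-sided local Weyl asymptotic with uniform control in $x$; what Donnelly's elliptic route buys is a self-contained proof of exactly the one-sided $L^\infty$ bound actually needed, at the level of elliptic PDE and consistent in spirit with the rest of the argument (Green's identity, Cauchy--Kovalevskaya, Cauchy estimates). To match the paper's philosophy you would want to consult Donnelly's elliptic argument instead.
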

Different authors have proved Lemma \ref{lemmaforlinfinityefct} (\cite{Hormanderlinfinitybounds, Soggelinfinity}) but we refer to the proof of Donnelly \cite{Donnelly} since it only uses elliptic estimates and avoids the use of microlocal analysis.

\subsection{Harmonic extension of the product of the complexified eigenfunctions}\label{harmext}

 For $p \in M$, we recall that in Proposition \ref{propholomorphicextension}, we defined a complex extension of the Laplace-Beltrami operator in a complex neighborhood $PD_{\C^{d}}(R_2)$ of $PD_{\R^{d}}(R_2) \subset O_p$ by
\begin{align}\label{complexoperator}
    L_g:= \sum_{|\alpha|\leq 2} a_{\alpha}(z)\partial^{\alpha}_z \, . 
\end{align}
We now define on $PD_{\C^{d}}(R_2) \times \C$ the operator $L_{g'} := L_g - \partial^2_{z_{d+1}}$. 
\begin{remark}\label{remarkchoiceofp}
Note that the operator $L_{g'}$ depends on the choice of chart, and hence on $p$. 
\end{remark}

We want to solve the following Cauchy problem for some $ R_3 \leq R_2$ and for some $\delta >0$ both depending only on $(M,g)$ :

\begin{align}\label{cproblemprooflemmasixmain1}
\left\{
\begin{array}{rll}
L_{g'} u_p& = 0  &\mbox{ on }  PD_{\C^{d+1}}( R_3,\delta), \\
u_p &= \prod\limits_{j\leq n} \phi_{k_j} &\mbox{ on } \{z_{d+1}=0\},\\
\partial_{z_{d+1}} u_p & = 0 & \mbox{ on } \{z_{d+1}=0\}.
\end{array}
\right.    
\end{align}  

We also want an estimate on $u_p$ and $\partial_{z_{d+1}} u_p$ depending only on $M$ and on the eigenvalues $\lambda_{k_j}$, $j \leq n$. The main result of this section is the following Proposition:

\begin{proposition}\label{propositionexistholharm}
  Let $R_2$ be given by Proposition \ref{lemmasupcomp} and let $R_3:=R_2/4$. 
  \comment{For $p \in M,$ define $\delta_{0, p}$ by
  $$
  \delta_{0, p}:=2(2^{d+1}e)^2 \sup_{z\in PD_{\C^d}(2R_3)} \left(\sum_{|\alpha|\leq 2} |a_{\alpha}(z)| (2R_3)^{2-|\alpha|}\right)
  $$
  where $PD_{\C^d}(2R_3)$ is the complexification of $PD_{\R^d}(2R_3) \subset O_p$. We further define $\delta>0$ by 
  $$
  \delta_0:=\sup_{p \in M} \delta_{0,p}, \quad
\delta:=\sqrt{\frac{1}{\delta_0}}.
  $$
  }
There exists $\delta(M, R_3)>0$ and there exists a unique holomorphic solution $u_p$ to the Cauchy problem \eqref{cproblemprooflemmasixmain1}. Furthermore,  the following bounds on $u_p$ hold:

\begin{align}\label{boundsonu}
    \sup_{PD_{\C^{d+1}}(R_3, \delta)} |u_p| \leq C_8  \prod_{j \leq n}C_6\e^{R_1\la_{k_j}} (C_7\la_{k_j})^{\frac{d-1}{2}}, \quad \sup_{PD_{\C^{d+1}}(R_3/2, \delta)} |\partial_{z_{d+1}} u_p| \leq \frac{2}{\delta R_3} \sup_{PD_{\C^{d+1}}(R_3, \delta)} |u_p|,
\end{align}
 with $C_6$ and $C_7$ as in Proposition \ref{lemmasupcomp}, $R_1$ as in Proposition \ref{propholomorphicextension} and  $C_8:=\left(\frac{1}{2^{2d+1}e^2}+1\right)$.
\end{proposition}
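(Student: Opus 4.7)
The plan is to reduce $L_{g'} u = 0$ to a Cauchy--Kovalevskaya problem in normal form, write the explicit formal power series solution in $z_{d+1}$, and verify its convergence on the stretched polydisk via an iterated Cauchy estimate. Rewriting the equation as $\partial_{z_{d+1}}^2 u = L_g u$ and combining with the Cauchy data, an induction in $m$ yields $\partial_{z_{d+1}}^{2m+1} u|_{z_{d+1}=0}=0$ and $\partial_{z_{d+1}}^{2m} u|_{z_{d+1}=0} = L_g^m f$, where $f:=\prod_{j \leq n}\phi_{k_j}$ is a bounded holomorphic function on $PD_{\C^d}(R_2)$ by Proposition~\ref{lemmasupcomp}. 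Comparing $z_{d+1}$-Taylor coefficients forces uniqueness, so the candidate holomorphic solution is
$$
u_p(z,z_{d+1}) \;=\; \sum_{m \geq 0} \frac{z_{d+1}^{2m}}{(2m)!}\, L_g^m f(z).
$$

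For convergence, the key quantitative ingredient is a scale-of-norms estimate for $L_g$. Because each $a_\alpha$ is bounded and holomorphic on $PD_{\C^d}(2R_3) \subset PD_{\C^d}(R_2)$, applying the multivariate Cauchy inequality to $\partial^\alpha h$ on a polydisk of radius $\rho-\rho'$ and collecting scale factors $(2R_3)^{2-|\alpha|}$ yields a constant $K = K(M)$ such that, for every $h$ holomorphic on $PD_{\C^d}(\rho)$ and all $R_3 \leq \rho' < \rho \leq 2R_3$,
$$
\sup_{PD_{\C^d}(\rho')} |L_g h| \;\leq\; \frac{K}{(\rho-\rho')^2}\, \sup_{PD_{\C^d}(\rho)} |h|.
$$
Iterating along the $m+1$ nested polydisks $\rho_j := R_3 + (m-j)R_3/m$ (so $\rho_0=2R_3$, $\rho_m=R_3$, uniform step $R_3/m$) produces
$$
\sup_{PD_{\C^d}(R_3)} |L_g^m f| \;\leq\; \Bigl(\frac{K m^2}{R_3^{2}}\Bigr)^{\!m}\, \sup_{PD_{\C^d}(2R_3)} |f|.
$$
Feeding this into the series and using Stirling's bound $m^{2m}/(2m)! \leq (\e^2/4)^m$, the $m$-th summand is controlled by a constant multiple of $(K\e^2 \delta^2/4)^{m}\sup_{PD_{\C^d}(2R_3)}|f|$ for $|z_{d+1}| \leq R_3\delta$. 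Choosing $\delta = \delta(M,R_3)>0$ small enough that $K\e^2\delta^2/4 < 1$ (the explicit formula from the commented remark targets exactly the asserted constant $C_8 = 1/(2^{2d+1}\e^2)+1$) makes the geometric series converge; combining with submultiplicativity of the supremum and the bound $\sup_{PD_{\C^d}(R_2)}|\phi_{k_j}| \leq C_6 \e^{R_1\la_{k_j}}(C_7\la_{k_j})^{(d-1)/2}$ from Proposition~\ref{lemmasupcomp} delivers the first inequality of \eqref{boundsonu}.

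The second inequality of \eqref{boundsonu} is then a one-variable Cauchy integral in $z_{d+1}$: at any point of $PD_{\C^{d+1}}(R_3/2,\delta)$, the circle of radius $R_3\delta/2$ in the $z_{d+1}$ coordinate lies inside $PD_{\C^{d+1}}(R_3,\delta)$, so Cauchy's inequality immediately yields the factor $2/(R_3\delta)$. The main obstacle in the whole argument is the iterated Cauchy estimate: because $L_g$ is second order, every application costs a factor $1/(\rho-\rho')^2$, and matching the resulting $m^{2m}$-growth against the double factorial $(2m)!$ --- so that geometric convergence persists after multiplication by $(R_3\delta)^{2m}$ --- is exactly what quantifies the admissible range of $\delta$. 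The remaining pieces (the formal series, its uniqueness, and the derivative Cauchy estimate) are essentially automatic once this is in place.
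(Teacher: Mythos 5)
Your proposal is correct, and it takes a genuinely different route from the paper. The paper reduces the Cauchy problem to a black-box citation of the complex Cauchy--Kovalevskaya theorem (\cite[Theorem 7.4.5]{Hormanderold}): it first subtracts the boundary data $\prod\phi_{k_j}$ to arrive at a problem with zero Cauchy data and inhomogeneity $-L_g\bigl(\prod\phi_{k_j}\bigr)$, verifies the smallness condition \eqref{conditionforck} by an explicit choice of $\delta$, applies the theorem to get existence, uniqueness, and the bound $\sup|\widetilde{u_p}|\leq 2(2R_3\delta)^2\sup|L_g(\prod\phi_{k_j})|$, adds back $\prod\phi_{k_j}$, and finishes by a Cauchy estimate for $L_g(\prod\phi_{k_j})$. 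You instead write the explicit formal series solution $u_p=\sum_{m\geq 0}\frac{z_{d+1}^{2m}}{(2m)!}L_g^m f$, bound $\sup_{PD_{\C^d}(R_3)}|L_g^m f|$ by $(Km^2/R_3^2)^m\sup_{PD_{\C^d}(2R_3)}|f|$ via an iterated Cauchy estimate across $m$ nested polydisks, and match the $m^{2m}$ growth against $(2m)!$ by Stirling to get geometric convergence for $\delta$ small. This is, in effect, a self-contained re-proof of the relevant case of the complex Cauchy--Kovalevskaya theorem; what it buys is independence from Hörmander's statement (and, incidentally, it would work already with $R_3=R_2/2$, since you go from $2R_3$ to $R_3$ rather than $4R_3$ to $R_3$ --- the proposition's $R_3=R_2/4$ is not needed for your version, only for the paper's chain of radii). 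What the paper's approach buys is brevity and delegation of the delicate factorial bookkeeping to a reference. One small caveat: the explicit value of $\delta$ you quote from the commented remark gives the paper's constant $C_8$ via the Hörmander route, not via your geometric-series route; in your framework the constant comes out as $(1-K\e^2\delta^2/4)^{-1}$ for your own $K$, which equals $C_8$ only after a matching choice of $\delta$. Since everything depends only on $(M,g)$ this is immaterial, but the parenthetical is a slight overstatement. A second minor point worth making explicit is that the convergent series does solve $L_{g'}u_p=0$ (not merely formally): term-by-term application of $L_g$ is justified because Cauchy's estimates promote uniform convergence of the series to locally uniform convergence of all its derivatives on the open polydisk, so $L_g$ commutes with the limit.
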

\comment{
\begin{remark}
    Since the coefficients $a_{\alpha}$ depend smoothly on $p \in M$, and since $M$ is compact, we see that $\delta>0$ and independent of $p$.
\end{remark}
}

To prove Proposition \ref{propositionexistholharm}, we will use the following complex version of Cauchy-Kovalevskaya's Theorem.

\begin{theorem}\cite[Theorem 7.4.5]{Hormanderold}\label{cauchykowthmainproof}
Consider the polydisk $PD_{\C^{d+1}}(R, \delta)$ where $R, \delta >0.$
    Assume that the coefficients $a_\alpha$ in the differential equation
    \begin{equation}\label{eqnckformainproof}
        \sum_{|\alpha|\leq m} a_\alpha \partial^{\alpha}_z u = f
    \end{equation}
are analytic in $PD_{\C^{d+1}}(R, \delta)$ and that  $a_{(0,\ldots, 0, m)} =1$. If 
\begin{equation}\label{conditionforck}
2(2^{d+1} e)^m \sum_{\alpha \neq (0,\ldots,0,m)}R^{m-|\alpha|} \delta^{m-\alpha_{d+1}} |a_\alpha(z)| \leq 1, \quad z \in PD_{\C^{d+1}}(R, \delta),
\end{equation}
   and if $f$ is bounded and analytic in $PD_{\C^{d+1}}(R, \delta)$, then there is a unique analytic solution $u$ to equation \eqref{eqnckformainproof} in $PD_{\C^{d+1}}(R/2,\delta)$ satisfying the boundary conditions
    \begin{equation}
    \partial^j_{z_{d+1}} u=0, \quad 0\leq j <m, \quad z_{d+1}=0.
    \end{equation}
    We also have the estimate
    \begin{equation}
        \sup_{PD_{\C^{d+1}}(R/2,\delta)} |u| \leq 2(R\delta)^m \sup_{PD_{\C^{d+1}}(R,\delta)}|f| \, .
    \end{equation}
\end{theorem}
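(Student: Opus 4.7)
The plan is to reformulate the Cauchy problem as a fixed-point equation for a contraction on a Banach space of bounded holomorphic functions, deduce existence by the Banach fixed-point theorem, and read off the quantitative bound from the Neumann series. First, using the normalization $a_{(0,\ldots,0,m)}=1$, I would rewrite \eqref{eqnckformainproof} as $\partial^m_{z_{d+1}} u = f - \sum_{\alpha\neq(0,\ldots,0,m)} a_\alpha\,\partial^\alpha u$. The vanishing Cauchy data $\partial^j_{z_{d+1}} u|_{z_{d+1}=0}=0$ for $0\le j<m$ allow us to integrate $m$ times in $z_{d+1}$ (along any path from $0$, by analyticity) to obtain the equivalent integral equation $u = I^m f + K u$, where $I^m$ denotes $m$-fold iterated integration in $z_{d+1}$ and $K u := -I^m\bigl(\sum_{\alpha\neq(0,\ldots,0,m)} a_\alpha\,\partial^\alpha u\bigr)$.

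Next, I would work in the Banach space $X$ of bounded holomorphic functions on $PD_{\C^{d+1}}(R/2,\delta)$ with the sup norm. The $m$-fold integration is elementary to bound: $\sup|I^m v|\le(R\delta)^m\sup|v|$, giving the polynomial dependence in $R\delta$ of the final estimate. The heart of the argument is to show that $K$ is a contraction of norm at most $1/2$ on $X$. A naive application of Cauchy's inequality on $PD(R/2,\delta)$ alone loses a factor $1/\rho^{|\alpha|}$ per derivative and will not yield a contraction. The standard remedy is a telescoping scheme on a nested family $PD_{\C^{d+1}}(\rho_j,\delta)$ with $\rho_0=R$ and $\rho_j\searrow R/2$, where Cauchy's inequality applied between consecutive polydisks gives a bound of the form
\[
\sup_{PD(\rho_{j+1},\delta)}|\partial^\alpha u|\le \frac{\alpha!}{(\rho_j-\rho_{j+1})^{|\alpha|-\alpha_{d+1}}\bigl((\rho_j-\rho_{j+1})\delta\bigr)^{\alpha_{d+1}}}\sup_{PD(\rho_j,\delta)}|u|.
\]
Summing over $\alpha$ with gaps $\rho_j-\rho_{j+1}$ chosen geometrically so that they telescope to $R/2$, the combinatorial count of multi-indices of order at most $m$ and a Stirling-type control of $\alpha!\le|\alpha|!$ combine to produce the factor $(2^{d+1}e)^m$. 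Assumption \eqref{conditionforck} is then precisely the inequality that forces $\|K\|_{X\to X}\le 1/2$.

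Given $\|K\|\le 1/2$, the operator $\mathrm{Id}-K$ is invertible on $X$ with inverse of norm at most $2$, and the unique fixed point $u=(\mathrm{Id}-K)^{-1}I^m f$ satisfies
\[
\sup_{PD(R/2,\delta)}|u|\le 2\sup|I^m f|\le 2(R\delta)^m\sup_{PD(R,\delta)}|f|,
\]
which is the stated bound. Uniqueness within the analytic class follows independently: the PDE together with the vanishing Cauchy data determines every Taylor coefficient of $u$ at the origin by a unique recursion (since the coefficient of $\partial^m_{z_{d+1}}$ is $1$), so any two analytic solutions on a polydisk must agree there.

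The main obstacle is the calibration of the shrinking-polydisk iteration with the explicit constants $(2^{d+1}e)^m$: one must track precisely how this factor emerges from the interplay between the telescoping losses $1/(\rho_j-\rho_{j+1})^{|\alpha|}$, the factorials $\alpha!$, and the count of multi-indices with $|\alpha|\le m$, and one must check that the cumulative loss is absorbed exactly by hypothesis \eqref{conditionforck}. An alternative, more classical, route is the method of majorants, which would replace the Banach-space analysis by constructing a scalar PDE with an explicit closed-form solution (of the type $C/(1-(z_1+\cdots+z_d)/R-z_{d+1}/(R\delta))^{N}$) whose Taylor coefficients at $0$ dominate those of $u$ coefficient-by-coefficient; this avoids the nested-polydisk bookkeeping at the cost of combinatorial estimates on power-series coefficients, and yields the same constants in the conclusion.
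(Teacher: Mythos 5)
First, a point of comparison: the paper does not prove this statement at all --- it is quoted directly from H\"ormander's book, so there is no in-paper proof to measure your attempt against. Your outline does follow the architecture of H\"ormander's argument (reduce to an integral equation $u = I^m f + Ku$ via the normalization $a_{(0,\ldots,0,m)}=1$ and the vanishing Cauchy data, then invert $\mathrm{Id}-K$), but the central step as you have written it does not go through. The operator $K$ contains derivatives of order up to $m$, so it is \emph{not} a bounded operator on the Banach space $X$ of bounded holomorphic functions on the fixed polydisk $PD_{\C^{d+1}}(R/2,\delta)$ with the sup norm; the claim $\|K\|_{X\to X}\le 1/2$ is not merely hard to calibrate, it is false for every choice of constants, since a bounded holomorphic function on a polydisk can have unbounded derivatives there. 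Your telescoping scheme repairs this for a \emph{single} application of $K$ (mapping functions on $PD_{\C^{d+1}}(\rho_j,\delta)$ to functions on $PD_{\C^{d+1}}(\rho_{j+1},\delta)$), but the Neumann series $\sum_n K^n I^m f$ requires infinitely many applications, each consuming part of the domain. With geometrically shrinking gaps $\rho_j-\rho_{j+1}\sim c\,2^{-j}$, the accumulated Cauchy-estimate losses for $K^n$ grow like $\prod_{j<n}(\rho_j-\rho_{j+1})^{-m}\sim 2^{mn(n-1)/2}$, which is super-exponential in $n$ and is not offset by the factorial gain $1/(m+n)!$ coming from the iterated $z_{d+1}$-integrations; the series as you have set it up does not converge.

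The missing idea is a mechanism for trading domain against smallness that survives indefinitely many iterations. Two standard repairs exist. (i) H\"ormander's own device is a single weighted norm taken over the whole continuum of nested polydisks at once (a bound of the form $|u(z)|\le C(R-\rho)^{-s}$ on $PD_{\C^{d+1}}(\rho,\delta)$ for all $\rho<R$, with Nagumo's lemma $|\partial u|\le Ce(s+1)(R-\rho)^{-s-1}$ replacing the naive Cauchy estimate); on that space the operator genuinely has norm $\le 1/2$, the factor $e$ in $(2^{d+1}e)^m$ arises from Nagumo's lemma, and condition \eqref{conditionforck} is exactly the statement that the weighted operator norm is at most $1/2$. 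The net gain of at least one $z_{d+1}$-integration in each $I^m\partial^\alpha$ (available because $\alpha_{d+1}\le m-1$ whenever $|\alpha|\le m$ and $\alpha\neq(0,\ldots,0,m)$) is what makes the weights close up. (ii) Alternatively, the majorant method you name in your final paragraph does yield these constants, but you only gesture at it without carrying out the coefficient-by-coefficient comparison. Either route must actually be executed; as written, your argument stops precisely where the content of the theorem lies.
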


\begin{proof}[Proof of Proposition \ref{propositionexistholharm}] The proof will be done in five steps:

\begin{itemize}
\item[Step 1:] Find $R$ such that the Cauchy problem \eqref{cproblemprooflemmasixmain1} is well-defined. This is done in section \ref{hormandersection1}.
    \item[Step 2:] Transform the Cauchy problem \eqref{cproblemprooflemmasixmain1} in the form $L_{g'}\widetilde{u_p}=f$ with the correct boundary conditions to apply Theorem \ref{cauchykowthmainproof}. This is done in section \ref{sectionbdrycond}.
    \item[Step 3:] Find $\delta >0$ which only depends on $M$ such that condition \eqref{conditionforck} is fulfilled. We then apply theorem \ref{cauchykowthmainproof} to solve the new Cauchy problem. This is done in section \ref{sectiondelta}. 
     \item[Step 4:] Go back to the original Cauchy problem \eqref{cproblemprooflemmasixmain1}. This is done in section \ref{sectionreal}.
    \item[Step 5:] Find upper bounds on  $|u_p|$ and $|\partial_{z_{d+1}} u_p|$. This is done in section \ref{boundsonf}.   
\end{itemize}

    \subsubsection{Step 1: Finding a suitable $ R$}\label{hormandersection1}
By the definition of $R_2$ (see Remark \ref{remarkrtwocomplex}),  the operator $L_{g'}$ has holomorphic and bounded coefficients on $PD_{\C^{d+1}}(R_2,\delta)$ for any $\delta \geq 0$. Moreover, by Proposition \ref{lemmasupcomp}, complexified eigenfunctions are holomorphic and bounded on $PD_{\C^{d}}(R_2)$. Trivially, they are also bounded on   $PD_{\C^{d+1}}(R_2, \delta)$ for any $\delta\geq 0$ 
 as well since they only depend on the first $d$ variables. Theorem \ref{cauchykowthmainproof} therefore suggests to choose $R_3 = R_2/2$ for the Cauchy problem \eqref{cproblemprooflemmasixmain1}. However, we will need to use Cauchy's estimates to bound derivatives of $u_p$ in Section \ref{boundsonf}. If we choose a radius $r$ for the Cauchy problem \eqref{cproblemprooflemmasixmain1}, then to use Cauchy's estimates, we will need that the eigenfunctions are holomorphic and bounded on $PD_{\C^d}(4r).$  Since we only know that the eigenfunctions are holomorphic and bounded on $PD_{\C^d}(R_2)$, we will instead choose $ R_3:= R_2/4$ for the Cauchy problem \eqref{cproblemprooflemmasixmain1}.

\subsubsection{Step 2: Changing boundary conditions}\label{sectionbdrycond}
We rewrite the Cauchy problem \eqref{cproblemprooflemmasixmain1} in a way that allows us to use Theorem \ref{cauchykowthmainproof}. To this end, we note that if $u_p$ solves the Cauchy problem \eqref{cproblemprooflemmasixmain1},  then $\widetilde{u_p} = u_p-\prod\limits_{ j \leq n} \phi_{k_j}$ solves the auxiliary Cauchy problem
\begin{align}\label{newcproblemprooflemmasixmain}
\left\{
\begin{array}{rll}
L_{g'} \widetilde{u_p}& = -L_g \left( \prod\limits_{j \leq n} \phi_{k_j}\right) & \mbox{ on }  PD_{\C^{d+1}}(R_3,\delta), \\
\widetilde{u_p} &= 0 &\mbox{ on } \{z_{d+1}=0\},\\
\partial_{z_{d+1}} \widetilde{u_p} & = 0 & \mbox{ on } \{z_{d+1}=0\}.
\end{array}
\right.    
\end{align}

\subsubsection{Step 3: Finding a suitable $\delta$}\label{sectiondelta}

To use Theorem \ref{cauchykowthmainproof}, we need condition \eqref{conditionforck} to be satisfied. Recall that  $L_{g'} := L_g  - \partial_{z_{d+1}}^2 $. Note that for any index $\alpha$ appearing in the definition of $L_g$, $\alpha_{d+1} = 0$ (i.e., the coefficients of $L_g$ only depend on the first $d$ variables). Therefore, condition \eqref{conditionforck} reads
\begin{equation*}
2(2^{d+1} e)^2 \sum_{|\alpha|\leq 2} (2R_3)^{2-|\alpha|} \delta^{2} |a_\alpha(z)| \leq 1, \quad z \in PD_{\C^{d+1}}(2R_3).
\end{equation*}

Recall that by the definition of $R_3$ (see Step 1 above), every coefficients $a_\alpha(z)$ is holomorphic and bounded on $PD_{\C^{d+1}}(2R_3, \delta)$ for any $\delta \geq 0$. Therefore, we can make the following choice for $\delta$: for $p \in M,$ define $\delta_{0, p}$ by
  \begin{align}\label{deltazeropdef}
  \delta_{0, p}:=2(2^{d+1}e)^2 \sup_{z\in PD_{\C^d}(2R_3)} \left(\sum_{|\alpha|\leq 2} |a_{\alpha}(z)| (2R_3)^{2-|\alpha|}\right) ,   
  \end{align}
  where $PD_{\C^d}(2R_3)$ is the complexification of $PD_{\R^d}(2R_3) \subset O_p$. We further define 
  \begin{align}\label{deltazerosanspdef}
  \delta_0:=\sup_{p \in M} \delta_{0,p} \quad \text{and} \quad
\delta:=\sqrt{\frac{1}{\delta_0}}.    
  \end{align}
  
 Since $M$ is compact, we see that $\delta>0$. With these choices of $R_3$ and $\delta$, Theorem \ref{cauchykowthmainproof} ensures that the Cauchy problem \eqref{newcproblemprooflemmasixmain} has a unique holomorphic solution $\widetilde{u_p}$ on  $PD_{\C^{d+1}}(R_3, \delta)$. Moreover, the solution $\widetilde{u_p}$ satisfies the estimate
\begin{align}\label{firststepmaincknice}
    \sup_{PD_{\C^{d+1}}(R_3,\delta)} |\widetilde{u_p}| \leq  2(2R_3 \delta)^2 \sup_{PD_{\C^{d}}(2R_3)}\left|L_g \left( \prod\limits_{j \leq n} \phi_{k_j}\right)\right|. 
\end{align}
 The supremum on the right-hand side is indeed on $PD_{\C^{d}}(2R_3)$ since the eigenfunctions $\phi_{k_j}$ only depend on the first $d$ variables. 

\subsubsection{Step 4: Back to the original problem}\label{sectionreal}

We will now go back to our initial Cauchy problem \eqref{cproblemprooflemmasixmain1}. To this end, we define $u_p :=\widetilde{u_p}+ \prod\limits_{j \leq n} \phi_{k_j}$, where $\widetilde{u_p}$ solves \eqref{newcproblemprooflemmasixmain}. It is clear that $u_p$ solves 
\begin{align}\label{newcproblemprooflemmasixmainbackatitbaby}
\left\{
\begin{array}{rll}
L_{g'}  u_p& = 0 & \mbox{ on }  PD_{\C^{d+1}}(R_3,\delta), \\
 u_p &= \prod\limits_{j \leq n} \phi_{k_j} &\mbox{ on } \{z_{d+1}=0\},\\
\partial_{z_{d+1}}  u_p & = 0 & \mbox{ on } \{z_{d+1}=0\}.
\end{array}
\right.    
\end{align} 

We note that $u_p$ is the unique holomorphic solution to \eqref{newcproblemprooflemmasixmainbackatitbaby} since $\widetilde{u_p}$ is the unique holomorphic solution to \eqref{newcproblemprooflemmasixmain}. Moreover, by \eqref{firststepmaincknice}, the following estimate holds for $u_p$:
\begin{align}\label{finalsteponemainckestimateonu}
    \sup_{PD_{\C^{d+1}}(R_3, \delta)} |u_p| \leq 2(2R_3 \delta)^2 \sup_{PD_{\C^{d}}(2R_3)}\left| L_g\left(\prod_{1\leq k \leq n}\phi_{j_k}\right)\right|  + \sup_{PD_{\C^{d}}(2R_3)}\left|\prod_{1\leq k \leq n}\phi_{j_k}\right|.
\end{align}

\subsubsection{Step 5: The bounds on $u_p$}\label{boundsonf}
It remains to estimate the right-hand side of \eqref{finalsteponemainckestimateonu}. By Proposition  \ref{lemmasupcomp}, 
\begin{align*}\label{pointwiseefctrecallnewgood}
\sup_{PD_{\C^d}(R_2)} |\phi_{k_j}| \leq C_6 \e^{R_1 \la_{k_j} }  (C_7\la_{k_j} )^{\frac{d-1}{2}}.    
\end{align*}

Since $R_3 = R_2/4$, we get the following bound:

\begin{equation}\label{boundsofproduct}
     \sup_{PD_{\C^{d}}(4R_3)}\left|\prod\limits_{j \leq n}\phi_{k_j}\right| \leq \prod_{j \leq n} C_6 \e^{R_1 \la_{k_j}} \left(C_7\la_{k_j} \right)^{\frac{d-1}{2}}.
\end{equation}

We will use Cauchy's estimate to bound the derivatives of a holomorphic function:
\begin{lemma}[Cauchy's estimates]\label{cauchyformulaholomoprhicextensionmain}
    Let $k \geq 1$, let $R, \delta>0$ and let $f$ be  holomorphic and bounded in $PD_{\C^{k+1}}(2R, \delta)$. Then, for $\alpha \in \N^k$
    \begin{align}\label{cauchyformulafirstderivative}
    \sup_{PD_{\C^{k+1}}(R, \delta)} |\partial_z^\alpha f| \leq \alpha! R^{-|\alpha|} \delta^{-\alpha_{k+1}} \sup_{PD_{\C^{k+1}}(2R, \delta)} |f| .
    \end{align}
\end{lemma}

Estimate \eqref{cauchyformulafirstderivative} directly implies the following :
\begin{equation}\label{boundsderivatives2}
 \sup_{PD_{\C^{d}}(2R_3)}\left|\partial^{\alpha}_z\left(\prod\limits_{j \leq n}\phi_{k_j}\right)\right| \leq \alpha! ( 2R_3)^{-|\alpha|} \sup_{PD_{\C^d}(4R_3)} \left|\prod\limits_{j \leq n}\phi_{k_j}\right|.
 \end{equation}

Using \eqref{boundsderivatives2}, we get the following inequalities:

\begin{align}\label{firsttoestimanewnew}
    2(2R_3 \delta)^2\sup_{PD_{\C^{d}}(2R_3)}\left|L_g\left(\prod_{j \leq n}\phi_{k_j}\right)\right| & \leq 2(2R_3 \delta)^2\sup_{PD_{\C^{d}}(2R_3)}\sum_{|\alpha|\leq 2} |a_{\alpha}| \left|\partial^{\alpha}_z\left(\prod\limits_{j \leq n}\phi_{k_j}\right)\right| \nonumber \\
    &\leq 4(2R_3 \delta)^2 \left(\sup_{PD_{\C^{d}}(2R_3)} \sum_{|\alpha|\leq 2} |a_{\alpha}|  (2R_3)^{-|\alpha|}\right) \sup_{PD_{\C^d}(4R_3)} \left|\prod\limits_{j \leq n}\phi_{k_j}\right|.
\end{align}
By the definition of $\delta$ in \eqref{deltazerosanspdef} and using \eqref{firsttoestimanewnew},
\comment{
 Now recall the definition of $\delta$ given in \eqref{choicefordelta}:
$$
\delta_0:=2(2^{d+1}e)^2\sup_{p \in M} \sup_{z\in PD_{\C^d}(2R_3)} \left(\sum |a_{\alpha}(z)| (2R_3)^{2-|\alpha|}\right), \quad
\delta:=\sqrt{\frac{1}{\delta_0}}.
$$

Inserting this in \eqref{firsttoestimanewnew},} we obtain the following:

\begin{align}\label{boundsonlgu}
     2(2R_3 \delta)^2\sup_{PD_{\C^{d}}(2R_3)}\left|L_g\left(\prod_{j \leq n}\phi_{k_j}\right)\right|  \leq  \frac{1}{2^{2d+1}e^2} \sup_{PD_{\C^d}(4R_3)} \left|\prod\limits_{j \leq n}\phi_{k_j}\right| \nonumber \\
     \leq   \frac{1}{2^{2d+1}e^2}  \prod_{j \leq n} C_6 \e^{R_1 \la_{k_j}} (C_7\la_{k_j} )^{\frac{d-1}{2}},
\end{align}
the last inequality following from \eqref{boundsofproduct}. Combining bounds  \eqref{boundsofproduct} and  \eqref{boundsonlgu} into \eqref{finalsteponemainckestimateonu}, we finally obtain the estimate

\begin{equation}\label{boundsonutotal}
      \sup_{PD_{\C^{d+1}}(R_3, \delta)} |u_p| \leq  \left(\frac{1}{2^{2d+1}e^2}+1\right) \prod_{j \leq n} C_6 \e^{R_1 \la_{k_j}} (C_7\la_{k_j} )^{\frac{d-1}{2}}.
\end{equation}

This completes the upper bound \eqref{boundsonu} on $u_p$.
Finally, the bound \eqref{boundsonu} on $\partial_{z_{d+1}}u_p$ follows directly from Lemma \ref{cauchyformulaholomoprhicextensionmain}. This finishes the proof of Proposition \ref{propositionexistholharm}.
\end{proof}

\subsection{Reduction to the real setting}\label{backrealsection}

So far, for any $p \in M$, we have constructed a holomorphic function $u_p$ solving $L_{g'}u_p=0$ on a complex neighborhood of  $PD_{\R^{d+1}}(R_3,\delta)$ in $O_p \times \R$. We will use these functions to construct a global real solution $H$ to $\Delta_{g'}H=0$ on $M \times (-T,T)$ for some $T>0$. We will denote by $(p, t)$ a point in $M \times \R$.

\begin{proposition}\label{exisitenceandunicity}
    With $R_3$ and $\delta$ defined as in Proposition \ref{propositionexistholharm}, let us define $T:=\frac{R_3\delta}{2}$. Then, there exists a real-analytic solution $H$ to the following problem:

\begin{align}\label{cproblemprooflemmasixmain}
\left\{
\begin{array}{rll}
\Delta_{g'} H& = 0  &\mbox{ on }  M \times (-T,T), \\
H &= \prod\limits_{j \leq n}\phi_{k_j} &\mbox{ on } \{t=0\},\\
\partial_{t} H & = 0 & \mbox{ on } \{t=0\}.
\end{array}
\right.    
\end{align}  
Furthermore, we have the following bounds for $H$: 

\begin{align}\label{boundsonH}
    \sup_{M \times (-T, T)} |H| \leq C_8   \prod\limits_{j \leq n}C_6\e^{R_1\la_{k_j} } (C_7\la_{k_j} )^{\frac{d-1}{2}}, \quad \sup_{M \times (-T, T)} |\partial_{t} H| \leq  \frac{1}{ T} C_8   \prod\limits_{j \leq n}C_6\e^{R_1\la_{k_j} } (C_7\la_{k_j} )^{\frac{d-1}{2}}, 
    \end{align}
where $C_6, C_7, C_8, R_1$ as in Proposition \ref{propositionexistholharm} .
\end{proposition}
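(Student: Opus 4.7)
The plan is to glue the local complex solutions $u_p$ from Proposition \ref{propositionexistholharm} into a single global real-analytic function on $M\times(-T,T)$. Concretely, for each $p\in M$ I restrict $u_p$ to the real polydisk $PD_{\R^{d+1}}(R_3,\delta)\subset O_p\times\R$. I will first check that this restriction is real-valued, then use the uniqueness part of real-analytic Cauchy–Kovalevskaya to conclude that the various $u_p$'s transferred to $M\times\R$ via the coordinate maps $\psi_p$ agree on overlaps, and hence patch together to a well-defined $H$.

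For the real-valuedness, the coefficients $a_\alpha(x)$ of $\Delta_g$ are real-analytic on $O_p$, so their holomorphic extensions satisfy $\overline{a_\alpha(\bar z)}=a_\alpha(z)$ on $PD_{\C^d}(R_1)$; the same identity holds for each eigenfunction $\phi_{k_j}$, hence for the Cauchy data $\prod_{j\leq n}\phi_{k_j}$. It follows that if $u_p(z)$ is the holomorphic solution produced by Proposition \ref{propositionexistholharm}, then the function $z\mapsto \overline{u_p(\bar z)}$ is also holomorphic on $PD_{\C^{d+1}}(R_3,\delta)$ and solves the same Cauchy problem \eqref{cproblemprooflemmasixmain1}. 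The uniqueness part of Proposition \ref{propositionexistholharm} then forces $\overline{u_p(\bar z)}=u_p(z)$, so $u_p$ is real on the real polydisk.

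To glue, note that the equation $\Delta_{g'}H=0$ and the Cauchy data $(\prod_{j\leq n}\phi_{k_j},0)$ are defined globally on $M\times\R$, and the data is real-analytic. By the real-analytic Cauchy–Kovalevskaya theorem, the Cauchy problem \eqref{cproblemprooflemmasixmain} admits a unique real-analytic solution in a neighborhood of any point of $\{t=0\}$. For any two points $p,q\in M$ with overlapping charts, the functions obtained by transferring $u_p$ and $u_q$ through $\psi_p,\psi_q$ are both real-analytic local solutions of this problem, so by the identity principle they agree on the connected overlap. Defining $H(p,t):=u_p(\psi_p(p),t)=u_p(0,t)$ therefore yields a real-analytic function on $M\times(-T,T)$ as soon as $T\leq R_3\delta$, which is ensured by the choice $T=R_3\delta/2$.

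The bounds in \eqref{boundsonH} now follow from \eqref{boundsonu} evaluated at the chart center $(0,t)$: the estimate on $|H|$ is immediate, since $(0,t)\in PD_{\R^{d+1}}(R_3,\delta)$ for $|t|<T$. For $\partial_t H(p,t)=\partial_{z_{d+1}}u_p(0,t)$, the second estimate of \eqref{boundsonu} applies on $PD_{\C^{d+1}}(R_3/2,\delta)$, that is, for $|t|<R_3\delta/2=T$; since $\frac{2}{R_3\delta}=\frac{1}{T}$, this yields precisely the claimed bound. The main conceptual obstacle is the gluing step — one must be careful that the locally-defined complex objects really descend to a single function on the manifold — but the argument reduces cleanly to real-valuedness (via the conjugation trick) combined with uniqueness of the real-analytic Cauchy problem; the remainder is bookkeeping of constants coming from Proposition \ref{propositionexistholharm}.
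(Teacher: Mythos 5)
Your proof is correct and follows essentially the same strategy as the paper: restrict the local complex solutions $u_p$ to the real polydisk, check that the restrictions are real-valued, glue them via the uniqueness of the real-analytic Cauchy--Kovalevskaya problem together with analytic continuation on connected overlaps, and read off \eqref{boundsonH} from the chart-independent bounds in Proposition~\ref{propositionexistholharm}. The only (minor, equally valid) deviation is how real-valuedness is established: you invoke Schwarz reflection, applying uniqueness to $z\mapsto\overline{u_p(\bar z)}$, whereas the paper notes that the imaginary part of the restriction solves the real Cauchy problem with vanishing data and hence is identically zero.
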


We will first construct this function $H$ on charts by restricting the holomorphic functions $u_p$ constructed in Section \ref{harmext} to the real setting. To this end, we will use the following elementary property of holomorphic functions and (finite-order) differential operators with holomorphic coefficients: 
\\

\underline{\textbf{Fact:}} Let $A$ be a finite subset of $\N^k$, let $R>0$ and let $b_\alpha^\C: PD_{\C^k}(R) \to \C$ be bounded holomorphic functions for $\alpha \in A$. Let $u^\C : PD_{\C^k}(R) \to \C$ be bounded and holomorphic as well. Let $b_\alpha^\R:= \left. b_\alpha^\C \right|_{ PD_{\R^k}(R)} $ and $u^\R := \left. u^\C \right|_{ PD_{\R^k}(R)}$. Then,
\begin{align}\label{pderestrictionreals}
\sum\limits_{\alpha \in A} b_\alpha^\C(z) \partial_z^\alpha u^\C =0 , \hspace{0.2cm} \forall z \in PD_{\C^k}(R) \quad  \implies \quad\sum\limits_{\alpha \in A} b_\alpha^\R(x) \partial_x^\alpha u^\R =0 , \hspace{0.2cm} \forall x \in PD_{\R^k}(R)   .  
\end{align}
This fact can easily be verified by expanding both Taylor series and comparing the coefficients.
\\

Now, let $p \in M$, $PD_{\R^{d+1}}(R_3,\delta) \subset O_p \times \R$ and $u_p: PD_{\C^{d+1}}(R_3,\delta) \to \C$ be the solution to the Cauchy problem \eqref{cproblemprooflemmasixmain1}  constructed using Proposition \ref{propositionexistholharm}. Let $H_p$ be the restriction of $u_p$ to $PD_{\R^{d+1}}(R_3,\delta) \subset O_p\times \R$.

By \eqref{pderestrictionreals}, $H_p$ solves the following real-analytic Cauchy problem with real-valued boundary conditions:
\begin{align}\label{cauchyproblemrealchart}
\left\{
\begin{array}{rll}
\Delta_{g'} H_p& = 0  &\mbox{ on }  PD_{\R^{d+1}}(R_3,\delta), \\
H_p &= \prod\limits_{j \leq n}\phi_{k_j} &\mbox{ on } \{x_{d+1}=0\},\\
\partial_{x_{d+1}} H_p & = 0 & \mbox{ on }  \{x_{d+1}=0\}.
\end{array}
\right.    
\end{align}  

Note that $H_p$ is real-valued by the real version of Cauchy-Kovalevskaya's Theorem (since the imaginary part of $H_p$ solves the same Cauchy problem \eqref{cauchyproblemrealchart} but is identically zero at the boundary).

Using again the real version of Cauchy-Kovalevskaya's Theorem, we note that the restriction of $H_p$ to any polydisk $Q \subset PD_{\R^{d+1}}(R_3,\delta)$ that intersects $\{x_{d+1}=0\}$ is the only analytic solution to the following Cauchy problem:

\begin{align}\label{cauchyproblemrealchart2}
\left\{
\begin{array}{rll}
\Delta_{g'} f& = 0  &\mbox{ on }  Q, \\
f &= \prod\limits_{j \leq n}\phi_{k_j} &\mbox{ on } \{x_{d+1}=0\},\\
\partial_{x_{d+1}} f & = 0 & \mbox{ on }  \{x_{d+1}=0\}.
\end{array}
\right.    
\end{align}  

Now, recall that for any $p \in M$, we defined $\psi_p : M \to O_p$ as the normal map associated with $O_p$. Let us take two points $p,q \in M$ such that $\psi_p^{-1}\big(PD_{\R^{d}}(R_3)\big) \cap \psi_q^{-1}\big(PD_{\R^{d}}(R_3)\big) \neq \emptyset$. If we transplant the solution $H_p$ from this intersection to a subset of $O_q \times \R$ through the transition map it is a real-analytic solution to the Cauchy problem \eqref{cauchyproblemrealchart2} in some polydisk $Q \subset PD_{\R^{d+1}}(R_3,\delta) \subset O_q \times \R$. By the uniqueness property for solutions to the Cauchy problem \eqref{cauchyproblemrealchart2}, $H_q$ and $H_p$ have to coincide on $Q$. By analyticity, the two functions coincide on $\psi_p^{-1}\big(PD_{\R^{d}}(R_3)\big) \cap \psi_q^{-1}\big(PD_{\R^{d}}(R_3)\big). $

This allows us to patch all the solutions $H_p$ into one global solution $H$ on $M \times [-R_3 \delta, R_3 \delta]$. We now choose $T :=R_3 \delta /2$, and remark that $H$ solves the Cauchy problem \eqref{cproblemprooflemmasixmain}.
Finally, the bounds \eqref{boundsonH} on $H$ follow from the bounds \eqref{boundsonu} on $u_p$, as the bounds on $u_p$ did not depend on the choice of of $p$. This completes the proof of Proposition \ref{exisitenceandunicity}.

\comment{\begin{remark}
    By the real version of Cauchy-Kovalevskaya's Theorem, $H_p$ is real-valued (since the imaginary part of $H_p$ solves the same Cauchy problem \eqref{cauchyproblemrealchart} but is identically zero at the boundary).
\end{remark} }

\subsection{The final step}\label{mainstepsection}

  We are now ready to finish the proof of Theorem \ref{maintheorem}.

    \begin{proof}[Proof of Theorem \ref{maintheorem}]
    Let $T$ and $H$ be defined as in Proposition \ref{exisitenceandunicity}.  Let $M_T := M \times [0,T]$ and denote $f_i(x,t):=\phi_i(x) e^{-\la_it}$ for $(x,t) \in M_T$. 
    
    Since both $H$ and $f_i$ are harmonic on $M_T$, Green's formula yields

\begin{align}\label{heuristicgreenformula}
    0=\int_{M_T} \left(f_i\Delta_{g'} H  - H \Delta_{g'} f_i \right)= \int_{\partial M_T} \left(f_i\partial_n H  - H \partial_n f_i\right) \, ,
\end{align}
where $\partial_n$ denotes the outwards normal derivative on $\partial M_T$. Since $M$ is a manifold without boundary, $\partial M_T = M \times \{0 \}  \bigcup M \times \{ T\}$ and therefore
\begin{align}\label{alltogether}
    \int_{M \times \{0\}} \left(f_i\partial_t H  - H \partial_t f_i\right) =\int_{M \times \{T\}} \left(f_i\partial_t H  - H \partial_t f_i\right) \, .
\end{align}

Since $H$ solves the Cauchy problem \eqref{cproblemprooflemmasixmain}, \eqref{alltogether} becomes
\begin{align*}
   \int_{M \times \{0\}} \la_i \phi_i \prod\limits_{j \leq n}\phi_{k_j}   = \int_{M \times \{T\}} \left(\phi_i e^{-T\la_i}\partial_t H  + H \la_i\phi_i e^{-T\la_i}\right). 
\end{align*}
Since $\lambda_i>0$ for all $i>0$, we obtain
\begin{align*}
    \left|\int_{M} \phi_i \prod\limits_{j \leq n}\phi_{k_j}   \right| \leq e^{-T\la_i} \frac{{\text{Vol}(M)}^{1/2}}{\la_i} \left(\|\partial_t H\|_{L^\infty(M_T)} \|\phi_i\|_{L^2(M)}   + \la_i\|H\|_{L^\infty(M_T)} \|\phi_i\|_{L^2(M)}\right) .
\end{align*}
Since $\phi_i$ is normalized in $L^2(M)$ and applying the bounds on $H$ from Proposition \ref{exisitenceandunicity}, we obtain

\begin{equation}\label{boundfinalalmost}
    \left|\int_{M}  \phi_i \prod\limits_{j \leq n}\phi_{k_j}   \right| \leq e^{-T\la_i} \frac{{\text{Vol}(M)}^{1/2}}{\la_i}  C_8  \left(\frac{1}{T}+\la_i\right)\prod_{j\leq n } C_6\e^{R_1\la_{k_j}} \left(C_7\la_{k_j}\right)^{\frac{d-1}{2}}  \, .
\end{equation}

 By noticing that for $x>0,$ $x^{(d-1)/2} \leq d! e^{x}$, by denoting $\lambda_1$ the first non-zero eigenvalue of $\Delta_g$ on $M$, and by recalling that eigenvalues are arranged in increasing order,  we obtain
\begin{equation}\label{boundfinal}
    \left|\int_{M}  \phi_i \prod\limits_{j \leq n}\phi_{k_j}   \right| \leq  {\text{Vol}(M)}^{1/2} C_8   \left(\frac{1}{T\lambda_1}+1\right)  \e^{-T \la_i} \prod_{j \leq n} C_6 d! \,e^{(R_1+C_7)  \la_{k_j} }\, .
\end{equation}

Now, since $C_6, C_8, T$ depend only on $M$, there exists $C_9(M)>0$ such that $${\text{Vol}(M)}^{1/2} C_8\left(\frac{1}{T \la_1}+1\right) C_6 d! \leq e^{C_9 \la_1} .$$ Since eigenvalues are labelled in increasing order, by choosing  $c := T$ and $C := R_1 + C_7 + C_9$, we finally obtain
\begin{align*}
        \left|\int_{M} \phi_i \prod\limits_{j \leq n}\phi_{k_j}  \right| \leq \e^{-c\la_i} \prod_{j \leq n}  \e^{C\la_{k_j}}\, ,
    \end{align*}

which completes the proof of Theorem \ref{maintheorem}.
 
\end{proof}

 \section{Remez lower bound}\label{Remezsec}

In this Section, we prove Lemma \ref{propositionlowerboundbegin}, on a lower bound for the $L^2$ norm of a product of Laplace-Beltrami eigenfunctions. To do that, we will find a set of fixed measure where all eigenfunctions appearing in the product will be bounded from below. This relies on estimates for Laplace-Beltrami eigenfunctions from \cite{Donnelly1988} and for the size of sub-level sets of harmonic functions from \cite{LogRemez}:

 \begin{theorem}\cite[Proposition 4.1]{Donnelly1988}\label{donnellymin}
     Let $(M,g)$ be a smooth compact manifold without boundary. Let $\phi_{k}$ be a Laplace-Beltrami eigenfunction on $M$ with eigenvalue $\lambda_k^2$. Then, there exists $R_4, C_9, C_{10}$ depending only on $M$ such that the following holds: for any $p \in M$,
     \begin{align*}
         \sup_{B(p,R_4)}|\phi_{k}| \geq \e^{-C_9 \la_k-C_{10}} \|\phi_{k}\|_{L^{\infty}(M)}.
     \end{align*}
 \end{theorem}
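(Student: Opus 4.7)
The plan is to lift $\phi_k$ to a harmonic function on $M\times\R$ and then use a quantitative three-balls (propagation-of-smallness) inequality, chained across $M$, to bound $\|\phi_k\|_{L^\infty(M)}$ in terms of $\sup_{B(p,R_4)}|\phi_k|$ for an arbitrary $p\in M$.

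First, I would set $H(x,t):=\phi_k(x)\cosh(\la_k t)$ on $M\times\R$ with the product metric $g'$. A direct computation using $\Delta_g\phi_k=\la_k^2\phi_k$ gives $\Delta_{g'}H=0$, and for any $T>0$,
\[
\sup_{M\times[-T,T]}|H|\leq \e^{\la_k T}\|\phi_k\|_{L^\infty(M)}.
\]
On the smooth compact manifold $M\times[-1,1]$, the classical Hadamard three-balls inequality for harmonic functions provides constants $r_0>0$ and $\alpha\in(0,1)$ depending only on $M$ such that for every $q\in M\times[-1/2,1/2]$ and every $r\leq r_0$,
\[
\sup_{B(q,2r)}|H|\leq\bigl(\sup_{B(q,r)}|H|\bigr)^{\alpha}\bigl(\sup_{B(q,4r)}|H|\bigr)^{1-\alpha},
\]
uniformly in $q$, thanks to the compactness of $M$.

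Next, I would fix $p\in M$ and choose $p^*\in M$ with $|\phi_k(p^*)|=\|\phi_k\|_{L^\infty(M)}$. Pick $R_4\leq r_0/8$ and a chain $p=p_0,p_1,\dots,p_N=p^*$ in $M$ with $d_M(p_i,p_{i+1})\leq R_4$; the integer $N$ depends only on $R_4$ and $\mathrm{diam}(M)$, hence only on $M$. The triangle inequality gives $B((p_{i+1},0),R_4)\subset B((p_i,0),2R_4)$, so iterating the three-balls inequality at the centers $(p_i,0)$ with $r=R_4$, and bounding each large supremum $\sup_{B((p_i,0),4R_4)}|H|$ by $\e^{4R_4\la_k}\|\phi_k\|_{L^\infty(M)}$, yields with $T:=4R_4$,
\[
\|\phi_k\|_{L^\infty(M)}\leq\bigl(\sup_{B((p,0),R_4)}|H|\bigr)^{\alpha^N}\bigl(\e^{\la_k T}\|\phi_k\|_{L^\infty(M)}\bigr)^{1-\alpha^N}.
\]
Since $|H(x,t)|\leq \e^{\la_k R_4}|\phi_k(x)|$ for $|t|\leq R_4$, substituting $\sup_{B((p,0),R_4)}|H|\leq \e^{\la_k R_4}\sup_{B(p,R_4)}|\phi_k|$ into the previous display and solving for $\sup_{B(p,R_4)}|\phi_k|$ yields the claimed inequality with $C_9:=R_4+\tfrac{1-\alpha^N}{\alpha^N}T$ and $C_{10}$ absorbing any remaining lower-order constants. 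The main obstacle is establishing the three-balls inequality with constants uniform in the basepoint $q$, which ultimately rests on uniform Carleman or frequency-function estimates for harmonic functions on $M\times[-1,1]$; the chain argument itself is then a careful but routine iteration.
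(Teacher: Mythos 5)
The paper does not prove this statement but cites it from Donnelly--Fefferman, whose argument is essentially the one you reconstruct: lift the eigenfunction to a harmonic function on $M\times\R$ (there via $\phi_k(x)e^{\la_k t}$, here via $\phi_k(x)\cosh(\la_k t)$, which works equally well) and propagate smallness from $B(p,R_4)$ to a maximum point along a chain of overlapping balls, the $e^{-C_9\la_k}$ loss coming from the $e^{\la_k T}$ growth of the lift raised to the power $\tfrac{1-\alpha^N}{\alpha^N}$. Your bookkeeping of the constants is correct, and the one input you take as a black box --- the three-ball inequality with constants uniform over a compact manifold, resting on Carleman or frequency-function estimates --- is exactly the technical core of the cited proof, which you identify accurately.
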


 \begin{theorem}\cite[Theorem 4.2 (ii)]{Donnelly1988}\label{donnellydoubling}
     Let $(M,g)$ be a smooth compact manifold without boundary. Let $\phi_{k}$ be a Laplace-Beltrami eigenfunction on $M$ with eigenvalue $\lambda_k^2$. Then, there exists $R_5, C_{11}, C_{12}>0$ and depending only on $M$ such that the following holds:  for any $p \in M$ and for any $r < R_5,$
     \begin{align*}
         N\big(\phi_{k}, B(p,r)\big):=\ln\left( \frac{\sup_{B(p,2r)}|\phi_{k}|}{\sup_{B(p,r)} |\phi_{k}|}\right) \leq C_{11} \la_k + C_{12}.
     \end{align*}
     $N\big(\phi_{k}, B(p,r)\big)$ is called the doubling index of $\phi_{k}$ in the ball $B(p,r)$. 
 \end{theorem}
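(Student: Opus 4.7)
The plan is to reduce the doubling estimate for $\phi_k$ to one for a harmonic function on the product manifold $M \times \R$, and then control the harmonic doubling via an Almgren-type frequency function. Define $F(x,t) := \phi_k(x) \cosh(\la_k t)$ on $M \times \R$ with product metric $g' = g + \ud t^2$. Since $\Delta_g \phi_k = \la_k^2 \phi_k$ and $\partial_t^2 \cosh(\la_k t) = \la_k^2 \cosh(\la_k t)$, and the positive product Laplacian used throughout the paper is $\Delta_{g'} = \Delta_g - \partial_t^2$, a direct computation gives $\Delta_{g'} F = 0$. Writing $\widetilde B((p,0), r)$ for the geodesic ball of radius $r$ in $M \times \R$, the two-sided comparison
$$
\sup_{B(p, r)} |\phi_k| \;\leq\; \sup_{\widetilde B((p,0), r)} |F| \;\leq\; \cosh(\la_k r) \sup_{B(p, r)} |\phi_k|
$$
is immediate, and implies $N(\phi_k, B(p, r)) \leq N(F, \widetilde B((p,0), r)) + \la_k r$. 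Thus it suffices to establish a doubling bound of the correct shape for the harmonic function $F$, absorbing the additive $\la_k r$ term into the final constants.

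Next I would introduce, at $q := (p, 0)$, the frequency function
$$
\mathcal{N}(q, r) := \frac{r \int_{\widetilde B(q, r)} |\nabla F|_{g'}^2 \, \ud V}{\int_{\partial \widetilde B(q, r)} F^2 \, \ud \sigma}.
$$
In Euclidean space this is monotone non-decreasing for harmonic $F$; on a Riemannian manifold, the standard Garofalo--Lin computation, based on Pohozaev and integration-by-parts identities on geodesic spheres, yields almost-monotonicity of the form: $\e^{Cr}\mathcal{N}(q,r)$ is monotone non-decreasing for $r < R_5(M)$, with $R_5$ depending only on the injectivity radius and on bounds for the sectional curvature of $M \times \R$. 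A three-balls argument based on this almost-monotonicity dominates the $L^2$-doubling of $F$ at any scale $r < R_5$ by $\mathcal{N}(q, R_5)$, and the usual $L^2 \to L^\infty$ elliptic comparison for harmonic functions promotes this into an $L^\infty$-doubling estimate of the same order.

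To close the argument I would bound $\mathcal{N}(q, R_5)$ by the fixed-scale $L^\infty$-doubling ratio of $F$. The upper bound $\sup_{\widetilde B(q, R_5)} |F| \leq \cosh(\la_k R_5) \|\phi_k\|_{L^\infty(M)} \leq \e^{O(\la_k)} \|\phi_k\|_{L^\infty(M)}$ is trivial, while the lower bound $\sup_{\widetilde B(q, R_5/2)} |F| \geq \sup_{B(p, R_5/2)} |\phi_k| \geq \e^{-C_9\la_k - C_{10}} \|\phi_k\|_{L^\infty(M)}$ is exactly the Donnelly--Fefferman non-smallness estimate already stated as Theorem \ref{donnellymin}. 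Taking the quotient gives $N(F, \widetilde B(q, r)) \leq C_{11}'\la_k + C_{12}'$ for all $r < R_5$, and transferring back to $\phi_k$ via the first step yields the claimed bound with possibly enlarged constants $C_{11}, C_{12}$ that depend only on $M$.

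The \textbf{main obstacle} will be carrying out the Riemannian almost-monotonicity of $\mathcal{N}$ with constants uniform in $p \in M$. The Euclidean computation exploits the fact that $r = |x|$ is an exact distance function and that $\partial_r$ is a true gradient flow; on $M \times \R$, curvature and second-order deviations of the metric from Euclidean produce lower-order error terms in the differentiated Pohozaev identity, and these must be absorbed by choosing $r$ below a threshold $R_5(M)$ determined purely by the geometry of $M$. Once this threshold is fixed, combination with Theorem \ref{donnellymin} and the harmonic lift is essentially mechanical.
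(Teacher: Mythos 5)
This statement is quoted in the paper directly from Donnelly--Fefferman (Theorem 4.2(ii) of \cite{Donnelly1988}) and is not proved there, so there is no internal proof to compare against. Your argument --- lifting to the harmonic function $\phi_k(x)\cosh(\la_k t)$ on $M\times\R$, invoking Garofalo--Lin almost-monotonicity of the Almgren frequency, and pinning down the frequency at the top scale via the non-smallness estimate of Theorem \ref{donnellymin} --- is precisely the standard proof of the cited result, and it is sound modulo the Riemannian monotonicity computation you correctly flag as the technical core (and modulo harmonizing the radii $R_4$ and $R_5$, which is routine since the non-smallness estimate holds at any fixed small scale).
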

The two previous results were proved by Donnelly and Fefferman in their work on the size of the nodal sets of Laplace-Beltrami eigenfunctions on real-analytic manifolds. We will also need the following Remez inequality due to Logunov and Malinnikova:

 \begin{theorem}\cite[Lemma 4.2]{LogRemez}\label{Remezlog}
     Let $A$ be a uniformly elliptic matrix with Lipschitz coefficients. Suppose that $\text{div}(A \nabla u) =0$ in $(20d) Q$, where $Q \subset \R^d$ is a cube and $\sup\limits_Q |u| = 1$. Let $N(u,Q):=\ln\left( \frac{\sup_{2Q}|u|}{\sup_Q |u|}\right)$ be the doubling index of $u$ in $Q$. Let $$E_a:=\left\{x \in \frac{1}{2} Q : \, |u(x)| < e^{-a} \right\}.$$ We have the following bound:

\begin{equation}\label{Remezeq}
    \mu_d( E_a) \leq C_{R} e^{-\frac{\beta a}{N}} s(Q)^d
\end{equation}
     where $\mu_d$ denotes the $d$-dimensional Lebesgue measure, $s(Q)$ denotes the sidelength of $Q$ and $C_{R}, \beta$ depend on $A$ and $d$ only.
 \end{theorem}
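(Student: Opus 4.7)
The plan is to reduce the $d$-dimensional Remez inequality for solutions of the elliptic equation $\operatorname{div}(A\nabla u)=0$ to the classical one-variable Remez inequality for polynomials, with the doubling index $N$ playing the role of the polynomial degree. The two main ingredients will be (a) a Bernstein-type derivative estimate showing that a solution of bounded doubling index behaves, on a small ball, like a polynomial of degree $O(N)$, and (b) a slicing/Fubini argument to integrate a 1D Remez estimate over lines.

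First, I would normalize so that $s(Q)=1$, $\sup_Q|u|=1$, and use that the hypothesis $\operatorname{div}(A\nabla u)=0$ on the enlarged cube $(20d)Q$ gives room to apply interior elliptic regularity. The doubling assumption, together with the three-ball (Hadamard) inequality for $A$-harmonic functions, implies a Bernstein-type bound of the shape
\begin{equation*}
\sup_{\tfrac{3}{4}Q}|\partial^\alpha u| \,\leq\, C^{|\alpha|}\,\bigl(N+1\bigr)^{|\alpha|}\,\sup_{Q}|u|
\end{equation*}
for all multi-indices $\alpha$, with $C=C(A,d)$. This is standard: one extends $u$ holomorphically in a thin tube (using analytic hypoellipticity since $A$ is Lipschitz gives analyticity of $u$ for constant-coefficient parts plus a perturbative argument, or alternatively quotes Donnelly--Fefferman-type frequency-based Cauchy estimates). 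This is the step I expect to require the most care and will be the main obstacle: one must leverage the doubling index $N$ as an effective degree, not just a growth rate.

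Next, I fix a coordinate direction, say $x_1$, and for each $y \in \tfrac{1}{2}Q' := \tfrac{1}{2}Q \cap \{x_1=0\}$ consider the slice $u_y(t) := u(t,y)$ on the interval $I_y$ of length $1$. By the Bernstein-type bound above, $u_y$ can be approximated on $\tfrac{1}{2}I_y$ in $L^\infty$ by its Taylor polynomial of degree $\lfloor c N\rfloor$ to precision much smaller than $e^{-a}$, up to the relevant range of $a$; equivalently $u_y$ behaves, for our purposes, as a polynomial $p_y$ of degree at most $cN$ with $\sup|p_y| \le 2$. The classical Remez inequality for polynomials of degree $n$ then gives
\begin{equation*}
\mu_1\!\left(\left\{t\in \tfrac{1}{2}I_y : |p_y(t)| < e^{-a}\right\}\right)\, \leq\, C_0\, e^{-\beta_0 a/n}
\end{equation*}
for absolute constants $C_0,\beta_0$. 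Taking $n\asymp N$ yields the 1D slice estimate $\mu_1\!\left(\{t : |u_y(t)|<e^{-a}\}\right)\le C_0 e^{-\beta a/N}$.

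Finally, integrate by Fubini over $y\in\tfrac{1}{2}Q'$:
\begin{equation*}
\mu_d(E_a) \,=\, \int_{\tfrac12 Q'} \mu_1\!\left(\left\{t:(t,y)\in E_a\right\}\right)\,dy \,\leq\, C_0\, e^{-\beta a/N},
\end{equation*}
which, after undoing the normalization, gives $\mu_d(E_a)\le C_R e^{-\beta a/N} s(Q)^d$ with $C_R,\beta$ depending only on $A$ and $d$. The only remaining issue is that the polynomial approximation along a slice loses a factor comparable to $e^{-cN}\sup_Q|u|$ in the worst case, but this is absorbed by restricting attention to $a \gtrsim N$ (for $a\lesssim N$ the inequality is trivial after enlarging $C_R$). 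Verifying the Bernstein-type Cauchy estimate uniformly in $N$ is where essentially all the elliptic PDE input is spent; the rest is one-variable approximation theory packaged cleanly via Fubini.
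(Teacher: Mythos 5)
The paper does not prove this statement; it imports it verbatim from Logunov--Malinnikova \cite[Lemma 4.2]{LogRemez}, whose actual proof is a multi-scale propagation-of-smallness argument (iterating doubling-index estimates over a hierarchy of subcubes), not a Bernstein-plus-slicing reduction to the one-variable polynomial Remez inequality. Your sketch has two genuine gaps. First, the Bernstein-type bound $\sup_{\frac{3}{4}Q}|\partial^\alpha u|\le C^{|\alpha|}(N+1)^{|\alpha|}\sup_Q|u|$ is not available in the stated generality: the matrix $A$ is only assumed Lipschitz, so $u$ is merely $C^{1,\alpha}_{\mathrm{loc}}$ and need not possess higher derivatives at all, let alone be analytic. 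Analytic hypoellipticity requires analytic coefficients; no perturbative argument off the constant-coefficient case produces analyticity from Lipschitz data. Even for analytic coefficients, an effective-degree Cauchy estimate with the doubling index playing the role of the degree is a deep statement essentially of the same strength as the theorem itself, so it cannot simply be quoted as ``standard.''

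Second, and independently, the slicing step fails in exactly the regime that matters. A Taylor (or Chebyshev) approximant of degree $\asymp N$ matches $u_y$ on a slice only to accuracy $e^{-cN}\sup_Q|u|$. The claimed bound is trivial for $a\lesssim N$ (the right-hand side of \eqref{Remezeq} is then bounded below by a constant times $s(Q)^d$), so the entire content of the theorem lies in the regime $a\gg N$ --- and there the level $e^{-a}$ sits far below the approximation error $e^{-cN}$, so the sublevel set of $u_y$ at height $e^{-a}$ is invisible to the polynomial $p_y$: one can only conclude $\mu_1\left(\{|u_y|<e^{-a}\}\right)\le \mu_1\left(\{|p_y|<2e^{-cN}\}\right)\le C_0e^{-\beta_0 c}$, a constant, rather than $C_0e^{-\beta a/N}$. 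Your closing remark has the two regimes reversed. This is precisely why Logunov and Malinnikova iterate over scales: each step propagates smallness only by a fixed factor per unit of doubling index, and the gain $e^{-\beta a/N}$ is accumulated over $\asymp a/N$ iterations rather than extracted from a single application of the one-dimensional Remez inequality.
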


We will start the proof of Lemma  \ref{propositionlowerboundbegin}. The first step is to find a large set where an eigenfunction is large.

\begin{proposition}\label{propremez}
    Let $(M,g)$ be a smooth closed manifold and $\phi_k$ be a Laplace-Beltrami eigenfunction on $M$ with eigenvalue $\la_k^2$. Then, there exists a chart $O_p$ and a cube $Q \subset O_p$ of sidelength depending only on $M$ with the following properties: for any $n$ fixed, there exists $C_{14}, C_{15}>0$  depending only on $M$, $Q$ and $n$ such that for any $k$,
\begin{equation}
   \mu_d \Big(\left\{|\phi_{\la_k}|<  e^{-C_{14} \la_k-C_{15}} \right\} \cap Q/2\Big)  \leq \frac{1}{2n}  {\mu_d(Q/2)}.
\end{equation}
    
\end{proposition}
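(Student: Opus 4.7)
The strategy combines the three cited results: Theorem \ref{donnellymin} locates a point where $\phi_k$ is of fair size, Theorem \ref{donnellydoubling} controls its doubling index on a fixed cube, and Theorem \ref{Remezlog} (applied via the harmonic extension of $\phi_k$) then bounds the sublevel set $\{|\phi_k|<\e^{-C_{14}\la_k - C_{15}}\}$.

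First, Theorem \ref{donnellymin} furnishes a point $p_k \in M$ with $\sup_{B(p_k,R_4)}|\phi_k| \geq \e^{-C_9 \la_k - C_{10}}\|\phi_k\|_{L^\infty(M)}$; since $\phi_k$ is $L^2$-normalized, $\|\phi_k\|_{L^\infty(M)} \geq 1/\sqrt{\text{Vol}(M)}$, yielding the corresponding lower bound on $\sup_{B(p_k, R_4)}|\phi_k|$. Work in the normal chart $O_{p_k}$ and take a cube $Q \subset O_{p_k}$ centered near $p_k$ with sidelength $s$ depending only on $M$, chosen small enough that $(20d)Q \subset O_{p_k}$ and Theorem \ref{donnellydoubling} applies, yet large enough that $B(p_k, R_4) \subset Q/2$. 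Then $\sup_Q|\phi_k| \geq \e^{-C_9 \la_k - C_{10}}/\sqrt{\text{Vol}(M)}$ and $N(\phi_k, Q) \leq C_{11}'\la_k + C_{12}'$ (the doubling index of $\phi_k$ on $Q$, transported from balls of comparable radius).

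Next, to apply Theorem \ref{Remezlog}, which is stated for divergence-form harmonic functions, I lift $\phi_k$ to $\tilde H(x,t) := \phi_k(x)\cosh(\la_k t)$ on $M \times \R$; this satisfies $\Delta_{g'} \tilde H = 0$ and, in local coordinates on $O_{p_k} \times \R$, fits the setting of Theorem \ref{Remezlog}. On the enlarged cube $\tilde Q := Q \times [-\tau, \tau]$ with $\tau$ depending only on $M$, both $\sup_{\tilde Q}|\tilde H|$ and $N(\tilde H, \tilde Q)$ differ from their analogues on $Q$ by an $O(\la_k \tau)$ correction. Normalizing $u := \tilde H / \sup_{\tilde Q}|\tilde H|$ and applying Theorem \ref{Remezlog} yields
\[
\mu_{d+1}\Bigl(\bigl\{(x,t) \in \tilde Q/2 : |u(x,t)| < \e^{-a}\bigr\}\Bigr) \leq C_R\,\e^{-\beta a/N(\tilde H,\tilde Q)} s^{d+1}.
\]
Restricting to $|t|\leq \tau/4$ so that $\cosh(\la_k t) \leq \cosh(\la_k \tau/4)$ and integrating in $t$ transfers this into a bound on $\mu_d\bigl(\{|\phi_k| < \e^{-C_{14}\la_k - C_{15}}\}\cap Q/2\bigr)$ once one solves $\e^{-C_{14}\la_k - C_{15}}\cosh(\la_k\tau/4) = \e^{-a}\sup_{\tilde Q}|\tilde H|$ for $a$.

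All losses in the preceding chain (Donnelly--Fefferman lower bound, harmonic lift, slice reduction) are linear in $\la_k$, so by choosing $C_{14}$ and $C_{15}$ large enough (depending only on $M$, $Q$, and $n$) one can arrange $\beta a/N(\tilde H, \tilde Q) \geq \ln(C'n)$ and hence bring the right-hand side below $\tfrac{1}{2n}\mu_d(Q/2)$. The main obstacle is the careful bookkeeping of these linear-in-$\la_k$ contributions across the various steps; the saving grace is that the Remez estimate depends on $n$ only through $\ln n$, so the same will be true of $C_{14}$ and $C_{15}$.
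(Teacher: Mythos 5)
Your overall strategy---lift $\phi_k$ to a harmonic function on $M\times\R$, control its doubling index via Theorem~\ref{donnellydoubling}, get a non-degeneracy bound from Theorem~\ref{donnellymin}, and feed both into the Remez-type Theorem~\ref{Remezlog}---is exactly the paper's. The use of $\cosh(\la_k t)$ rather than $e^{\la_k t}$ for the lift is immaterial. But two steps in your bookkeeping are off in ways that matter.

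First, you read Theorem~\ref{donnellymin} as an existence statement (``furnishes a point $p_k$''), whereas it is a universal one: for \emph{every} $p\in M$ one has $\sup_{B(p,R_4)}|\phi_k|\geq e^{-C_9\la_k-C_{10}}\|\phi_k\|_{L^\infty(M)}$. Your misreading makes the point, hence the chart and the cube, depend on $k$. That would be fatal: the statement must hold for a single fixed $Q$ uniformly in $k$, since Corollary~\ref{corrolaryremez} intersects the good sets for all the $\phi_{k_j}$ inside one and the same cube. The fix is free precisely because the hypothesis is universal: just fix $p$ once and for all, independently of the eigenfunction.

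Second, you ask for the cube to be simultaneously small enough that $(20d)Q\subset O_p$ (and that Theorem~\ref{donnellydoubling} applies) yet large enough that $B(p,R_4)\subset Q/2$. There is no guarantee these two constraints are compatible: $R_4$ is an absolute constant produced by Donnelly--Fefferman and could be comparable to, or larger than, the injectivity radius, in which case $B(p,R_4)$ does not even fit inside the chart, let alone inside $Q/2$ with room for the $20d$-dilation. The paper sidesteps this by taking $Q$ small (sidelength $<R_4$, diagonal $<R_5$) and then \emph{propagating} the lower bound from $B(p,R_4)$ down to $B(p,r)\subset Q$ by iterating Theorem~\ref{donnellydoubling} roughly $\log_2(R_4/r)$ times; each iteration costs a factor $e^{-C\la_k-C'}$, which is absorbed into the final constants. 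That propagation step is missing from your argument and is exactly what makes the small-cube choice work. Once you replace ``make $Q$ contain $B(p,R_4)$'' with ``take $Q$ small and pay a doubling penalty to reach $B(p,R_4)$,'' the rest of your computation (choosing $a\sim C(n)\la_k$ so that $e^{-\beta a/N}$ is beaten, then transferring from the $(d+1)$-dimensional sublevel set back to the $d$-dimensional one by slicing in $t$) goes through as in the paper.
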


\begin{proof}
Let $n$ be fixed. 
Let $p \in M$ be fixed and $O_p \subset \mathbb R^d$ be a normal chart. Let us fix a cube $Q$ of sidelength $r$ such that $20(d+1)Q \subset O_p$ and that both $r < R_4$ and $2r\sqrt{d} < R_5$, where $R_4$ comes from theorem \ref{donnellymin} and $R_5$ from theorem \ref{donnellydoubling}. We also define $Q' := Q \times [-r/2,r/2]$.
\\

From now on, every constant that will appear in the proof of Proposition \ref{propremez} may depend on $M$ and $Q$. To ease notation, since  $M$ and $Q$ are fixed, we will not explicitly state the dependence of each constant.
\\

For any $k \in \N$, we define $h_k(x,y) := \phi_{\la_k}(x)e^{\la_k y}$. Then, $h_k$ solves $\text{div}(A\nabla h_k)=0$ in $20(d+1)Q'$. For any $x \in Q/2$ and any $y \in [-r/4,r/4]$, $|h_k(x,y)| \leq e^{\frac{r}{4} \la_k} |\phi_{k}(x)|$. This implies that for any $L>0$, we have the following inequality :
\begin{equation}\label{eqcubes}
    \mu_d\Big( \left\{ |\phi_{k}| <  e^{-\frac{r}{4}{\la_k}}L \right\} \cap Q/2\Big) \leq \frac{2}{r} \mu_{d+1}\Big( \left\{ |h_k| <L \right\} \cap Q'/2\Big) .
\end{equation}

Note that the cube $Q$, which is centered at 0 and of sidelength $r$, is included within the ball $B\left(0, r\sqrt{d}\right)$. Hence
$$
N(\phi_{k}, Q) \leq \ln\left(\frac{\sup_{B(0, 2r\sqrt{d})}|\phi_{k}|}{\sup_{B(0,r)}|\phi_{k}|}\right).
$$
Therefore, by applying Theorem \ref{donnellydoubling} $\lfloor \ln_2\left(\sqrt{d}\right) \rfloor+2$ times, we have that 
$
N(\phi_{k}, Q) \leq  C_{11} {\la_k} + C_{12},
$
and therefore
\begin{align}\label{nhkq}
    N(h_k, Q) \leq C_{16} {\la_k} + C_{17}.
\end{align}

Similarly, since $B(0,r)\subset Q$, it comes by applying Theorem \ref{donnellydoubling} $\lfloor\ln_2\left(\frac{R_4}{r}\right)\rfloor+1$ times that
$$
\sup_{Q}|\phi_{k}|\geq \sup_{B(0,r)}|\phi_{k}| \geq \sup_{B(0,R_4)}|\phi_{k}| e^{-C_{18}{\lambda_k}-C_{19}}.
$$
 Hence, by Theorem \ref{donnellymin} and since $\|\phi_{k}\|_{L^{\infty}(M)}\geq \frac{1}{Vol(M)^{1/2}}$ (as $\|\phi_{k}\|_{L^2(M)}=1$), we get
$
\sup_{Q}|\phi_{k}| \geq e^{-C_{20}{\lambda_k}-C_{21}}
$
 which implies that 
\begin{align}\label{lowerboundhkonqprime}
     \sup\limits_{Q'} |h_{k}| \geq  e^{-C_{22} {\la_k}-C_{23}}.
 \end{align}
 
The cube $Q'$ and the function ${h_k}(\sup\limits_{Q'} |h_{k}|)^{-1}$ both satisfy the conditions of theorem \ref{propremez}. Therefore, by \eqref{nhkq}, \eqref{lowerboundhkonqprime} and by theorem \ref{propremez},  there exists $\beta$ and $C_R$, such that for any $a >0$, we have the following inequality:

\begin{equation}\label{mudplusonehkwitha}
    \mu_{d+1} \Big( \left\{ |h_k| < e^{-a} e^{-C_{22} {\la_k}-C_{23}} \right\} \cap Q'/2\Big) \leq  C_{R} e^{\frac{-\beta a}{C_{16} {\la_k} + C_{17}} }. 
\end{equation}

We now set $a := C_{24}(n) {\la_k}$ where $C_{24}$ is large enough (and independent of $\lambda_k$) such that $$C_{R} e^{-C_{24}\frac{\beta  {\la_1}}{C_{16} {\la_1} + C_{17}} }\leq \frac{1}{2n} \mu_{d+1}(Q'/2) .$$ 

With this choice of $a$, we obtain using \eqref{mudplusonehkwitha} that
\begin{equation}\label{ineqcubes2}
    \mu_{d+1}\Big(  \big\{ |h_k| <  e^{-(C_{24}+C_{22}) {\la_k}-C_{23}} \big\} \cap Q'/2\Big) \leq \frac{1}{2n} \mu_{d+1}(Q'/2).
\end{equation}

Combining equations \eqref{eqcubes} and \eqref{ineqcubes2}, and noticing that $\mu_{d+1}(Q'/2) = r/2 \mu_{d}(Q/2)$, we obtain the following inequality:

\begin{equation}\label{ineqcubes3}
    \mu_{d}\Big(  \big\{ |\phi_{k}| <  e^{-(C_{24}+C_{22}+\frac{r}{4}) {\la_k}-C_{23}} \big\} \cap Q/2\Big) \leq \frac{1}{2n} \mu_{d}(Q/2) \, .
\end{equation}

Choosing $C_{14} := C_{24}+C_{22}+\frac{r}{4}$ and $C_{15} := C_{23}$, and noticing that they both only depend on $M,Q$ and $n$, we finally obtain 
\begin{equation*}
    \mu_{d}\Big(  \big\{ |\phi_{k}| <  e^{-C_{14} {\la_k}-C_{15}} \big\} \cap Q/2\Big) \leq \frac{1}{2n} \mu_{d}(Q/2) \, .
\end{equation*}

which completes the proof of Proposition \ref{propremez}.
\end{proof}

Now that we have found a small set where each eigenfunction is small, we can find a large set where all the eigenfunctions in the product are bounded from below.

\begin{corollary}\label{corrolaryremez}
    Let $(M,g)$ be a smooth closed manifold. Let $n \geq 1$. For any $0 < k_1 <  \ldots < k_n$, there exists a set $E$ of measure at least $C_{25}$ where $C_{25}>0$ is independent of $n$ and of the choice of $k_j$ and there exists $C_{26}(M,n)>0$ with the following property: for any $1 \leq j \leq n$, we have

    \begin{equation}
        \inf\limits_E |\phi_{k_j}| > e^{-C_{26} \la_{k_j}}.
    \end{equation}
    
\end{corollary}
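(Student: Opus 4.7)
My plan is to apply Proposition \ref{propremez} to each eigenfunction in the product and then combine the resulting sublevel set estimates via a union bound. The factor $\frac{1}{2n}$ in Proposition \ref{propremez} has been tailored precisely for this purpose, so the argument should be short.

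First, I would fix $n$ and choose a chart $O_p$ together with a cube $Q \subset O_p$ as provided by Proposition \ref{propremez}. Since the sidelength of $Q$ depends only on $M$, the same cube $Q$ can be used for every eigenfunction $\phi_{k_j}$. For each $1 \leq j \leq n$, Proposition \ref{propremez} furnishes constants $C_{14}, C_{15}$, depending only on $M$ and $n$, such that
\begin{equation*}
    \mu_d \bigl( B_j \bigr) \leq \frac{1}{2n}\mu_d(Q/2), \quad \text{where } B_j := \bigl\{x \in Q/2 : |\phi_{k_j}(x)| < e^{-C_{14}\la_{k_j} - C_{15}}\bigr\}.
\end{equation*}

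Next, I would apply the union bound to the sets $B_j$. Setting $B := \bigcup_{j \leq n} B_j$, subadditivity gives
\begin{equation*}
    \mu_d(B) \leq \sum_{j \leq n} \mu_d(B_j) \leq n \cdot \frac{1}{2n}\mu_d(Q/2) = \frac{1}{2}\mu_d(Q/2).
\end{equation*}
Define $E := (Q/2) \setminus B$. Then $\mu_d(E) \geq \frac{1}{2}\mu_d(Q/2) =: C_{25}$, and $C_{25}$ depends only on $M$ since the sidelength of $Q$ depends only on $M$. By construction, for every $x \in E$ and every $1 \leq j \leq n$, we have $|\phi_{k_j}(x)| \geq e^{-C_{14}\la_{k_j} - C_{15}}$.

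Finally, to absorb the additive constant $C_{15}$ into the exponential, I would use that the eigenvalues are bounded below by $\la_1 > 0$ (the first nonzero eigenvalue). Choosing $C_{26} := C_{14} + C_{15}/\la_1$, which depends only on $M$ and $n$, we get $C_{14}\la_{k_j} + C_{15} \leq C_{26}\la_{k_j}$ for all $j$, and therefore $\inf_E |\phi_{k_j}| \geq e^{-C_{26}\la_{k_j}}$, completing the proof. There is no real obstacle here: all the analytic work was done in Proposition \ref{propremez}, and this corollary is simply a union-bound repackaging, which is why the proposition was stated with the $\frac{1}{2n}$ threshold in the first place.
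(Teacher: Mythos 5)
Your proof is correct and follows essentially the same route as the paper: apply Proposition~\ref{propremez} to each $\phi_{k_j}$, take the union bound over the $n$ sublevel sets to show their union occupies at most half of $Q/2$, define $E$ as the complement, and absorb the additive constant $C_{15}$ using $\la_{k_j} \geq \la_1$. The only detail the paper makes explicit that you omit is pulling the set back from the chart to the manifold via $\psi_p^{-1}$, but this is immaterial.
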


\begin{proof}
Choose $O_p$ and $Q$ as in Proposition \ref{propremez}. By Proposition \ref{propremez}, we know that , 
$$
\mu_d \left(\bigcup_{j \leq n}\left\{|\phi_{k_j}|<  e^{-C_{14} \la_{k_j} -C_{15}} \right\} \cap Q/2\right)  \leq \frac{1}{2} {\mu_d(Q/2)},
$$
and therefore,
$$
\mu_d \left(\bigcap_{j \leq n}\left\{|\phi_{k_j}|\geq   e^{-C_{14} \la_{k_j} -C_{15}} \right\} \cap Q/2\right)  \geq \frac{1}{2} {\mu_d(Q/2)} .
$$

Then, denote 
$$
E':= \bigcap_{j \leq n}\left\{|\phi_{k_j}|\geq   e^{-C_{14} \la_{k_j} -C_{15}} \right\} \cap Q/2\, .
$$
 Set $E:= \psi_p^{-1}(E')$ and observe that $e^{-C_{14} \la_{k_j} -C_{15}}\geq e^{-C_{26}\la_{k_j}}$ where $C_{26}(M,n):=C_{14}+\frac{C_{15}}{\lambda_1}>0$. This finishes the proof of Corollary \ref{corrolaryremez}.
\end{proof}

We can now complete the proof of Lemma \ref{propositionlowerboundbegin}:

\begin{proof}[Proof of Lemma \ref{propositionlowerboundbegin}]
For $n$ fixed, let the set $E$  be as in Corollary \ref{corrolaryremez}. Let $\mu_g$ be the Lebesgue measure on $M$ induced by $g$. We have the following inequalities:
\begin{align*}
     \left\|\prod_{j \leq n}\phi_{k_j} \right\|_{L^2(M)} \geq  \left\|\prod_{j \leq n}\phi_{k_j} \right\|_{L^2(E)}
     \geq \mu_g(E)^{1/2}\, \inf\limits_E \left| \prod_{j \leq n}\phi_{k_j} \right|
     \geq \mu_g(E)^{1/2}\, \prod_{ j \leq n} e^{-C_{26} {\lambda_{k_j}}}
\end{align*}
where the last inequality follows from Corollary \ref{corrolaryremez}. We now set $C_{3}(M) :=  \mu_g(E)^{1/2}$ and $C_{4}(M,n) := C_{26}$ to finish the proof of Lemma \ref{propositionlowerboundbegin} .
\end{proof}

\section{The finite approximation}\label{finitesec}

In this Section, we prove Theorem \ref{approxbyfinitenumber} about the finite approximation in $L^2$ for a product of Laplace-Beltrami eigenfunctions.
\begin{proof}[Proof of Theorem \ref{approxbyfinitenumber}]
     Let $S:=\sum\limits_{i \geq 1} e^{-C_2 \la_i}$, where $C_2$ comes from Corollary \ref{corollary}. Note that $S<\infty$ by Weyl's asymptotic formula (see for instance \cite{chavel1984eigenvalues}).
     
     Let $C_5(M,n)\geq C_1$ be a large constant that will be defined later.   Denote $$A_n:=\left\{i: \la_i \leq C_5 \sum\limits_{j \leq n} \la_{k_j} \right\}$$ and let $A_n^c = \N \backslash A_n$ . Recall that we defined $c_i:=\int\limits_M \phi_i\prod\limits_{j \leq n}\phi_{k_j} .$ Since $\{\phi_i\}$ is an orthonormal basis of $L^2(M)$,
    \begin{align*}
        \left\|\prod\limits_{j\leq n} \phi_{k_j}-\sum_{i \in A_n} c_{i} \phi_{i}\right\|^2_{L^2(M)} &=\sum_{i \in A_n^c} |c_{i}|^2 \, ,\\
        &\leq \sum_{i \in A_n^c}  e^{-2C_2 \la_i} \,
    \end{align*}
   where the second inequality follows from Corollary \ref{corollary} and the fact that $C_5 \geq C_1$. Hence, using Lemma \ref{propositionlowerboundbegin}, we get that 
 $$
 \left\|\prod\limits_{j\leq n} \phi_{k_j}-\sum_{i \in A_n} c_{i} \phi_{i}\right\|^2_{L^2(M)} \leq \left\|\prod\limits_{j\leq n} \phi_{k_j}\right\|^2_{L^2(M)} \, \frac{1}{C_3^2} \prod_{j\leq n} e^{2C_{4}\la_{k_j}} \sum_{i \in A_n^c}  e^{-2C_2 \la_i} .
 $$
 
 By the definition of $A_n$ and $S$,
 \begin{align}\label{almostlastineqapprox}
    \left\|\prod\limits_{j\leq n} \phi_{k_j}-\sum_{i \in A_n} c_{i} \phi_{i}\right\|^2_{L^2(M)} & \leq \left\|\prod\limits_{j\leq n} \phi_{k_j}\right\|^2_{L^2(M)} \, \frac{1}{C_3^2}\prod_{j\leq n}e^{(2C_{4}-C_2 C_5) \lambda_{k_j}}  \, \sum_{i \in A_n^c}  e^{-C_2 \la_i} \nonumber \\
    & \leq \left\|\prod\limits_{j\leq n} \phi_{k_j}\right\|^2_{L^2(M)} \, \frac{S}{C_3^2}\prod_{j\leq n}e^{(2C_{4}-C_2 C_5) \lambda_{k_j}}  .
 \end{align}
 If $2C_{4}-C_2C_5\leq 0$, \eqref{almostlastineqapprox} becomes
 \begin{align}\label{eq48}
     \left\|\prod\limits_{j\leq n} \phi_{k_j}-\sum_{i \in A_n} c_{i} \phi_{i}\right\|^2_{L^2(M)}  \leq \left\|\prod\limits_{j\leq n} \phi_{k_j}\right\|^2_{L^2(M)} \, \frac{S}{C_3^2}e^{n(2C_{4}-C_2 C_5) \lambda_{1}}  .
 \end{align}

 Now, choose $C_5$ large enough so that $2C_{4}-C_2C_5\leq 0$ and
 $
 \frac{S}{C_3^2}e^{n(2C_{4}-C_2 C_5) \lambda_{1}} \leq \frac{1}{100^2}.
 $
With this choice of $C_5$, inequality \eqref{eq48} becomes
$$
\left\|\prod\limits_{j\leq n} \phi_{k_j}-\sum_{i \in A_n} c_{i} \phi_{i}\right\|_{L^2(M)}  \leq  \frac{1}{100} \left\|\prod\limits_{j\leq n} \phi_{k_j}\right\|_{L^2(M)},
$$
 
 which completes the proof of Theorem \ref{approxbyfinitenumber}.
\end{proof}

 \comment{
\section{Appendix}

\begin{proof}[Proof of Proposition \ref{lemmasupcomp}]
To prove Proposition \ref{lemmasupcomp}, we will use the following Lemma:

\begin{lemma}\label{holomorphicextensionefctmain}[Holomorphic extension of Laplace eigenfunctions on analytic manifolds]
  Let $R>0$ and consider the Laplace-Beltrami operator 
  $\Delta_g=\sum a_{\alpha} \partial^{\alpha}_x$ in the polydisk $PD_{\R^d}(R).$ Assume that the coefficients $a_{\alpha}$ are real analytic and bounded functions in $PD_{\R^d}(R).$ Let $\phi_{\lambda}$ be an eigenfunction of the Laplace-Beltrami operator, that is, a solution to  
    $$
    \sum a_{\alpha}(x) \partial^{\alpha}_x \phi_{\lambda} = \lambda \phi_{\lambda}
    $$
    in $PD_{\R^d}(R).$ Then, there exists $  \tilde R(M)>0$ depending on $R$ and depending continuously on the coefficients $a_{\alpha}$, such that  $\phi_{\lambda}$ can be extended as a holomorphic function to the complex polydisk $PD_{\C^d}(\tilde R)$ with the following estimate:
    \begin{align}\label{estimateefunctionextension1main}
        \sup_{PD_{\C^d}(\tilde R)} |\phi_{k}| \leq C_1(\tilde R) \e^{\sqrt{\lambda_k}R} \sup_{PD_{\R^d}(R)} |\phi_{k}|
    \end{align}
    for some $C_1(\tilde R)>0.$
\end{lemma}

\begin{remark}
    Note that since we work in a coordinate chart $O_p$, $C_1'$ and $\tilde R$ depend on $p$. However, since the coefficients $a_\alpha$ depend smoothly on $p$ as well, $C_1'$ and $\tilde R$ also depend smoothly on $p$.
\end{remark}

We refer to \cite{Donnelly1988} and \cite{Lin}, who used Lemma \ref{holomorphicextensionefctmain} to estimate the size of nodal sets of eigenfunctions on analytic manifolds, for a proof.

 We continue with the proof of Proposition \ref{lemmasupcomp}. To find explicit bounds on the right-hand side of \eqref{estimateefunctionextension1main}, we will use the following Lemma:

\
\begin{lemma}[\cite{Donnelly}, Theorem 1.6]\label{lemmaforlinfinityefct}
    Let $\phi_k$ be an eigenfunction of $\Delta_g$. There exists a constant $C_2>0$ depending only on the injectivity radius and on the absolute value of the sectional curvatures of $M$ such that $\sup\limits_M |\phi_{k}|\leq  (C_2\sqrt{\la_k})^{\frac{d-1}{2}}$.
\end{lemma}
Different authors have proved this result (\cite{Hormanderlinfinitybounds},\cite{Soggelinfinity}) but we refer to the proof of Donnelly \cite{Donnelly} since it only uses elliptic estimates and avoids the use of microlocal analysis. 
\\

We can now complete the proof of Proposition \ref{lemmasupcomp}: let $R_1$ be given by Proposition \ref{propholomorphicextension}: this ensures that the coefficient of $\Delta_g$ are real-analytic and bounded in $PD_{\R^d}(R_1)$.  By Lemma \ref{holomorphicextensionefctmain} with $R=R_1$, on each chart $O_p$ there exists $\Tilde{R}$ such that the holomorphic extension of $\phi_{j}$ is well-defined in $PD_{\C^d}(\tilde R)$. We set $C_1$ as the supremum of $C_1'$ over all $p \in M$, and set $R_2$ as the infimum of $\Tilde{R}$ over $p \in M$. This ensures that the holomorphic extension is defined on $PD_{\C^d}(R_2)$ with estimate \eqref{estimateefunctionextension1main}, in each chart $O_p$. We use Lemma \ref{lemmaforlinfinityefct} to get the estimate \eqref{poitwiseestimatenovenocegood}. This finishes the proof of Lemma \ref{lemmasupcomp}.
\end{proof}
}

\bibliographystyle{plain} 
\bibliography{references}

\end{document}